\newtheorem{theorem}{Theorem}[section]
\newtheorem{lemma}[theorem]{Lemma}
\newtheorem{corollary}[theorem]{Corollary}
\newtheorem{remark}[theorem]{Remark}
\theoremstyle{definition}
\newtheorem{definition}[theorem]{Definition}
\theoremstyle{remark}
\newtheorem*{note*}{Note}
\numberwithin{equation}{section}
\newcommand{\rank}{\mathop{\operator@font rank}}
\newcommand{\conv}{\mathop{\operator@font conv}}
\newcommand{\vol}{\mathrm{vol}}
\newcommand{\onetagright}{\tagsleft@false}
\newcommand{\ls}{\leqslant}
\newcommand{\gr}{\geqslant}
\newcommand{\set}[1]{\left\{#1\right\}}
\newcommand{\vrad}{{\rm vrad}}
\begin{document}

\title{\bf On a multi-integral norm defined by weighted sums of log-concave random vectors}

\medskip

\author{Nikos Skarmogiannis}

\date{}

\maketitle

\begin{abstract}
Let $C$ and $K$ be centrally symmetric convex bodies in ${\mathbb R}^n$.
We show that if $C$ is isotropic then
\begin{equation*}\|{\bf t}\|_{C^s,K}=\int_{C}\cdots\int_{C}\Big\|\sum_{j=1}^st_jx_j\Big\|_K\,dx_1\cdots dx_s
\ls c_1L_C(\log n)^5\,\sqrt{n}M(K)\|{\bf t}\|_2\end{equation*}
for every $s\gr 1$ and ${\bf t}=(t_1,\ldots ,t_s)\in {\mathbb R}^s$, where $L_C$ is the isotropic constant of $C$ and $M(K):=\int_{S^{n-1}}\|\xi\|_Kd\sigma (\xi)$. This reduces a question of V.~Milman to the problem of estimating
from above the parameter $M(K)$ of an isotropic convex body. The proof is based on an observation that combines results of
Eldan, Lehec and Klartag on the slicing problem: If $\mu $ is an isotropic log-concave probability measure on ${\mathbb R}^n$
then, for any centrally symmetric convex body $K$ in ${\mathbb R}^n$ we have that
$$I_1(\mu ,K):=\int_{{\mathbb R}^n}\|x\|_K\,d\mu(x)\ls c_2\sqrt{n}(\log n)^5\,M(K).$$
We illustrate the use of this inequality with further applications.
\end{abstract}

\section{Introduction}\label{sec:intro}

Let $K$ be a centrally symmetric convex body in ${\mathbb R}^n$. For any $s$-tuple ${\cal C}=(C_1,\ldots ,C_s)$ of centrally
symmetric convex bodies $C_j$ of volume $1$ in ${\mathbb R}^n$, consider the norm on ${\mathbb R}^s$, defined by
\begin{equation*}\|{\bf t}\|_{{\cal C},K}=\int_{C_1}\cdots\int_{C_s}\Big\|\sum_{j=1}^st_jx_j\Big\|_K\,dx_s\cdots dx_1\end{equation*}
where ${\bf t}=(t_1,\ldots ,t_s)$. If ${\cal C}=(C,\ldots ,C)$ then we write $\|{\bf t}\|_{C^s,K}$ instead of $\|{\bf t}\|_{{\cal C},K}$.
A question posed by V.~Milman is to determine if, in the case $C=K$, one has that $\|\cdot\|_{K^s,K}$ is equivalent to the
standard Euclidean norm up to a term which is logarithmic in the dimension, and in particular, if under some cotype condition
on the norm induced by $K$ to ${\mathbb R}^n$ one has equivalence between $\|\cdot\|_{K^s,K}$ and the Euclidean norm.

This question was studied by Bourgain, Meyer, V.~Milman and Pajor. For simplicity let us assume
that $\vol_n(K)=1$ (this is only a matter of normalization). It was proved in \cite{BMMP} that
\begin{equation*}\|{\bf t}\|_{{\cal C},K}\gr c\sqrt{s}\Big (\prod_{j=1}^s|t_j|\Big)^{1/s}\end{equation*}
where $c>0$ is an absolute constant. Later, Gluskin and V.~Milman obtained a better lower bound in \cite{Gluskin-VMilman-2004}, in fact
working in a more general context: Let $A_1,\ldots ,A_s$ be measurable sets of volume $1$ in ${\mathbb R}^n$ and $K$ be a star body
of volume $1$ in ${\mathbb R}^n$ with $0\in {\rm int}(K)$. Then,
\begin{equation}\label{eq:GM-basic}\|{\bf t}\|_{{\cal A},K}\gr c\,\|{\bf t}\|_2\end{equation}
for all ${\bf t}=(t_1,\ldots ,t_s)\in {\mathbb R}^s$. Their argument was based on the Brascamp-Lieb-Luttinger inequality (see also \cite[Chapter~4]{AGA-book-2}).

We are mainly interested in upper bounds for the quantity $\|{\bf t}\|_{C^s,K}$. Since $\|{\bf t}\|_{C^s,K}=\|{\bf t}\|_{(TC)^s,TK}$
for any $T\in SL(n)$, we may restrict our attention to the case where $C$ is isotropic
(see Section~\ref{sec:background} for the definition and background information). In fact, we are particularly
interested in the case where $C$ is isotropic and $K=C$, which corresponds to V.~Milman's original question.

For any centered log-concave probability measure $\mu $ on ${\mathbb R}^n$ and any centrally symmetric convex body $K$
in ${\mathbb R}^n$, consider the parameter
\begin{equation}\label{eq:I-definition}I_1(\mu ,K):=\int_{{\mathbb R}^n}\|x\|_Kd\mu (x).\end{equation}
Assume that $C$ is isotropic. Chasapis, Giannopoulos and the author observed in \cite{Chasapis-Giannopoulos-Skarmogiannis-2020}
that one may write
\begin{equation}\label{eq:basic-identity}\|{\bf t}\|_{C^s,K}=\|{\bf t}\|_2L_C\, I_1(\mu_{{\bf t}},K),\end{equation}
where $\mu_{{\bf t}}$ is an isotropic, compactly supported log-concave probability measure depending on ${\bf t}$.
One may easily check that if $\mu $ is an isotropic log-concave probability measure
on ${\mathbb R}^n$ and $K$ is a centrally symmetric convex body in ${\mathbb R}^n$ then
\begin{equation*}\int_{O(n)}I_1(\mu ,U(K))\,d\nu (U) =\int_{{\mathbb R}^n}\int_{O(n)}\|x\|_{U(K)}d\nu (U)\,d\mu (x)=
M(K)\int_{{\mathbb R}^n}\|x\|_2d\mu (x)\approx \sqrt{n}M(K)\end{equation*}
where
\begin{equation*}M(K):=\int_{S^{n-1}}\|\xi\|_Kd\sigma (\xi)\end{equation*}
and $\nu ,\sigma $ denote the Haar probability measures on $O(n)$ and $S^{n-1}$ respectively. It follows that
\begin{equation}\label{eq:intro-1}\int_{O(n)}\|{\bf t}\|_{U(C)^s,K}\,d\nu (U)\approx (L_C\,\sqrt{n}M(K))\,\|{\bf t}\|_2.\end{equation}
Therefore, one might hope to obtain a quantity of the order of $L_C\,\sqrt{n}M(K)\,\|{\bf t}\|_2$ as an upper estimate for $\|{\bf t}\|_{C^s,K}$.
Using an argument which goes back to Bourgain (also, employing Paouris' inequality
and Talagrand's comparison theorem) Chasapis, Giannopoulos and the author proved in
\cite{Chasapis-Giannopoulos-Skarmogiannis-2020} that if $C$ is an isotropic centrally symmetric convex body in $\mathbb{R}^n$ and $K$ is
a centrally symmetric convex body in $\mathbb{R}^n$ then
\begin{equation*}
\|{\bf t}\|_{C^s,K}\ls c\,\Big (L_C\max\Big\{ \sqrt[4]{n},\sqrt{\log (1+s)}\Big\}\Big )\,\sqrt{n}M(K)\|{\bf t}\|_2
\end{equation*}
for every ${\bf t}=(t_1,\ldots,t_s)\in \mathbb{R}^s$, where $c>0$ is an absolute constant. Using the basic identity
\eqref{eq:basic-identity} they also obtained an alternative proof of \eqref{eq:GM-basic}.

In this note we provide a poly-logarithmic in $n$, and independent from $s$, upper bound for $\|{\bf t}\|_{C^s,K}$.

\begin{theorem}\label{th:intro-1}
Let $C$ be an isotropic centrally symmetric convex body in $\mathbb{R}^n$ and $K$ be a centrally symmetric convex body in $\mathbb{R}^n$. Then,
\begin{equation*}
\|{\bf t}\|_{C^s,K}\ls c_1L_C(\log n)^5\,\sqrt{n}M(K)\|{\bf t}\|_2
\end{equation*}
for every ${\bf t}=(t_1,\ldots,t_s)\in \mathbb{R}^s$, where $c_1>0$ is an absolute constant.
\end{theorem}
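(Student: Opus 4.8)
The strategy is to reduce the whole statement to the single-measure inequality advertised in the abstract, namely that for any isotropic log-concave probability measure $\mu$ on $\mathbb{R}^n$ and any centrally symmetric convex body $K$ one has $I_1(\mu,K)\ls c_2\sqrt{n}(\log n)^5\,M(K)$. Granting this, the theorem follows immediately from the basic identity \eqref{eq:basic-identity}: since $\mu_{\bf t}$ is itself an isotropic log-concave probability measure on $\mathbb{R}^n$, we get $\|{\bf t}\|_{C^s,K}=\|{\bf t}\|_2 L_C\,I_1(\mu_{\bf t},K)\ls c_1 L_C(\log n)^5\sqrt{n}M(K)\|{\bf t}\|_2$, with no dependence on $s$ because $I_1(\mu_{\bf t},K)$ is controlled uniformly over all isotropic log-concave measures. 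So the real content is the $I_1$ estimate, and that is what I would prove first.

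To prove $I_1(\mu,K)\ls c_2\sqrt{n}(\log n)^5 M(K)$, I would compare $\mu$ with the uniform measure on a Euclidean ball, using the known near-resolution of the slicing (hyperplane) conjecture. Recall that $\int_{O(n)}I_1(\mu,U(K))\,d\nu(U)\approx \sqrt{n}M(K)$, so the averaged statement is trivial; the issue is to pass from the average over rotations to the value at a \emph{fixed} position of $K$. The natural route is: (i) dominate $\mu$ by a multiple of the Lebesgue measure on a dilate $t\mathbb{B}_2^n$ of the ball on the bulk of its mass — more precisely, use that an isotropic log-concave measure has density bounded by $e^{cn}$ times... no, better, use the $\psi_1$/concentration-of-mass estimates (Paouris) to confine $\mu$ essentially to $c\sqrt{n}\mathbb{B}_2^n$; then (ii) invoke the Eldan–Lehec–Klartag bound $L_\mu\ls c(\log n)^{5}$ (or the current best poly-log power) on the isotropic constant, which says the density of $\mu$ at the origin, hence by log-concavity on a ball of radius $\sim\sqrt{n}$, is comparable to $n^{-n/2}$ up to a $(\log n)^{O(n)}$-free factor captured by $L_\mu$. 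Combining, on a set carrying most of the mass, $d\mu$ is pointwise $\ls (c\,L_\mu)^n\cdot\mathbf{1}_{c\sqrt{n}\mathbb{B}_2^n}\,dx/\vol_n(c\sqrt{n}\mathbb{B}_2^n)$ — this is too lossy as stated, so instead one should run the comparison through the $L^1$ norm directly: $\int\|x\|_K\,d\mu(x)$ is, up to the $\psi_1$-tail contribution which is negligible, at most a constant times $\int_{c\sqrt n\mathbb{B}_2^n}\|x\|_K\,\frac{dx}{\vol_n(c\sqrt n\mathbb{B}_2^n)}$ \emph{after} one absorbs the density ratio, and the latter integral over the ball equals $c'\sqrt{n}M(K)$ exactly by integration in polar coordinates. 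The density ratio between $\mu$ and the normalized ball, restricted to the bulk, is what $L_\mu$ controls, giving the $(\log n)^5$.

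The step I expect to be the main obstacle is exactly this passage from a fixed convex body $K$ to the rotational average — equivalently, controlling $I_1(\mu,K)$ by $I_1$ of the uniform measure on a ball without picking up a dimension-dependent (rather than poly-logarithmic) loss. The clean way is the density-domination argument above: one needs a bound of the form $\frac{d\mu}{dx}(x)\ls C_n\cdot f_{E}(x)$ on a set of measure close to $1$, where $f_E$ is the density of the uniform measure on an appropriate ellipsoid (in the isotropic case, a ball of radius $\sim\sqrt n$), with $C_n$ bounded by a power of $\log n$. This is precisely where Klartag's solution (as refined by Eldan–Lehec) of the slicing problem enters: the bound $L_\mu\ls (\log n)^{O(1)}$ is equivalent to saying the maximal density of $\mu$ is at most $(\log n)^{O(1)} / \vol_n(c\sqrt n\mathbb{B}_2^n)$, and by log-concavity this near-maximal density persists on a ball of radius a constant times $\sqrt n$ carrying a fixed fraction of the mass. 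One then handles the remaining mass (at Euclidean norm $\gr C\sqrt n$) via Paouris' large-deviation inequality $\mu(\{\|x\|_2\gr C t\sqrt n\})\ls e^{-t\sqrt n}$ for $t\gr c$, bounding $\|x\|_K\ls \|x\|_2/\mathrm{(inradius\ of\ }K)$ and checking the tail integral is dominated by the main term — a routine computation. Assembling these pieces yields the $I_1$ inequality, and then Theorem 1.1 follows by one line as above.
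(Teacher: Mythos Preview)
Your reduction step is exactly the paper's: invoke the identity \eqref{eq:basic-identity} to write $\|{\bf t}\|_{C^s,K}=\|{\bf t}\|_2 L_C\,I_1(\mu_{\bf t},K)$ with $\mu_{\bf t}$ isotropic log-concave, and then apply the uniform bound $I_1(\mu,K)\ls c\sqrt{n}(\log n)^5 M(K)$. So the first half of your plan is fine and matches the paper verbatim.

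The gap is in your proposed proof of the $I_1$ bound. The density-domination argument does not give a poly-logarithmic factor. The isotropic constant satisfies $L_\mu=\|f_\mu\|_\infty^{1/n}$, so the bound $L_\mu\ls (\log n)^{O(1)}$ only tells you that $\|f_\mu\|_\infty\ls (\log n)^{O(n)}$; compared with the density $\approx c^{-n}$ of the uniform measure on $c\sqrt{n}B_2^n$, the pointwise ratio you would need to absorb is $(c\,L_\mu)^n$, which is exponential in $n$ even when $L_\mu$ is an absolute constant. You noticed this yourself (``this is too lossy as stated''), but the fallback you suggest --- ``run the comparison through the $L^1$ norm directly'' and claim that ``the density ratio \ldots\ is what $L_\mu$ controls, giving the $(\log n)^5$'' --- is the same error in disguise: the integrand $\|x\|_K$ is not rotation-invariant, so there is no way to replace the pointwise density ratio by its geometric mean (which is what $L_\mu$ measures) without already solving the problem. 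For a concrete obstruction, take $\mu$ uniform on an isotropic convex body $C$; on $C$ the density of $\mu$ is identically $1$, while the density of the uniform measure on $c\sqrt{n}B_2^n$ is $\approx c'^{-n}$, so the ratio is $c'^{\,n}$ everywhere on the bulk, and no polylog bound emerges.

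The paper does \emph{not} proceed by density comparison. Instead it quotes the theorem of Eldan and Lehec that for any isotropic log-concave $\mu$,
\[
\int_{{\mathbb R}^n}\|x\|_K\,d\mu(x)\ \ls\ c_1\sqrt{\log n}\,\tau_n\int_{{\mathbb R}^n}\|x\|_K\,d\gamma_n(x),
\]
where $\tau_n$ is a third-moment-type constant arising from stochastic localization. One then uses Eldan's inequality $\tau_n^2\ls c_2\sum_{k\ls n}\sigma_k^2/k$ together with the Klartag--Lehec thin-shell bound $\sigma_k\ls c_3(\log k)^4$ to get $\tau_n\ls c(\log n)^{9/2}$, hence $\sqrt{\log n}\,\tau_n\ls c(\log n)^5$; finally $\int\|x\|_K\,d\gamma_n\approx\sqrt{n}M(K)$. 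The point is that the comparison with the Gaussian goes through a \emph{coupling} argument (stochastic localization), not a density bound, and the cost is governed by the thin-shell constant rather than by $L_\mu$.
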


In the case $C=K$, Theorem~\ref{th:intro-1} and \eqref{eq:GM-basic} show that, for any centrally symmetric convex
body $K$ of volume $1$ in ${\mathbb R}^n$,
\begin{equation*}
c_1\|{\bf t}\|_2\ls \|{\bf t}\|_{K^s,K}\ls c_2(\log n)^5\,\sqrt{n}M(K^{\ast})L_{K^{\ast}}\|{\bf t}\|_2
\end{equation*}
for every $s\gr 1$ and every ${\bf t}=(t_1,\ldots,t_s)\in \mathbb{R}^s$, where $c_i>0$ are absolute constants and
$K^{\ast}$ is an isotropic linear image of $K$ (note that $L_{K^{\ast}}=L_K$). Thus, we have a reduction of V.~Milman's question to
the problem of estimating the parameter $M(K^{\ast})$ for an isotropic centrally symmetric convex body $K^{\ast}$
in ${\mathbb R}^n$. One may hope that $\sqrt{n}M(K^{\ast})L_{K^{\ast}}\ls c_3(\log n)^b$ for some absolute constant $b>0$,
which would completely settle the problem, modulo the best possible exponent of $\log n$. However, the latter problem
remains open; see Remark~\ref{rem:M-problem} for a brief review of what is known.

The source of our improved estimate in Theorem~\ref{th:intro-1} is the next result which we believe is of independent interest.

\begin{theorem}\label{th:intro-2}Let $\mu $ be an isotropic log-concave probability measure on ${\mathbb R}^n$.
For any centrally symmetric convex body $K$ in ${\mathbb R}^n$ we have that
$$I_1(\mu ,K)\ls c_2\sqrt{n}(\log n)^5\,M(K)$$
where $c_2>0$ is an absolute constant.
\end{theorem}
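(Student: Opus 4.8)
\medskip
\noindent\emph{Proof proposal.} The bound one is aiming for is, up to constants, the Gaussian value: if $\gamma_n$ is the standard Gaussian measure on $\mathbb{R}^n$, then exactly as in the computation behind \eqref{eq:intro-1},
$$\int_{\mathbb{R}^n}\|x\|_K\,d\gamma_n(x)=\Big(\int_{\mathbb{R}^n}\|y\|_2\,d\gamma_n(y)\Big)M(K)\approx\sqrt{n}\,M(K).$$
So the plan is to show that an isotropic log-concave $\mu$ has first $K$--moment $I_1(\mu,K)=\int\|x\|_K\,d\mu$ exceeding the Gaussian one by at most a polylogarithmic factor, the price of passing from $\gamma_n$ to a general $\mu$ being governed by the isotropic constant of $\mu$, which by the recent almost-resolution of the slicing problem (Klartag and Lehec, after Eldan) is at most $c(\log n)^4$. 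One first truncates: by Paouris' deviation inequality $\mu(\{x:\|x\|_2\ge ct\sqrt{n}\})\le e^{-t\sqrt{n}}$ for $t\ge1$, while $\|\cdot\|_K$, being a seminorm, obeys a reverse H\"older inequality with respect to the log-concave $\mu$ (Borell's lemma), so $\big(\int\|x\|_K^2\,d\mu\big)^{1/2}\le c\,I_1(\mu,K)$; since $I_1(\mu,K)$ is finite ($\|\cdot\|_K$ is bounded on $S^{n-1}$ and $\int\|x\|_2\,d\mu\le\sqrt{n}$), combining these shows that $\{\|x\|_2\ge c\sqrt{n}\,\log n\}$ contributes at most $\tfrac12 I_1(\mu,K)$, which after absorption reduces matters to bounding $\int_{\{\|x\|_2\le R\}}\|x\|_K\,d\mu$ with $R\approx\sqrt{n}\,\log n$.

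For the bulk I would write $\|x\|_K=\sup_{y\in K^{\circ}}\langle x,y\rangle$ (with $K^{\circ}$ the polar) and run the chaining argument of \cite{Chasapis-Giannopoulos-Skarmogiannis-2020} --- Bourgain's decomposition together with Talagrand's comparison theorem --- organised dyadically over the parameters $q\in\{2,4,\dots,n\}$ of the $L_q$--centroid bodies $Z_q(\mu)$, for which $Z_2(\mu)=B_2^n$ and $B_2^n\subseteq Z_q(\mu)\subseteq c\,q\,B_2^n$. This produces a bound of the form
$$\int_{\{\|x\|_2\le R\}}\|x\|_K\,d\mu\ \le\ c\,(\log n)\cdot\max_{2\le q\le n}\big(\text{averaged size of }Z_q(\mu)\text{ relative to }K\big),$$
the factor $\log n$ being the number of dyadic scales. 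The ``observation'' of the paper is that one may now insert the polylogarithmic bounds of Eldan, Klartag and Lehec on the slicing problem and on thin-shell / KLS isoperimetry: these control each $Z_q(\mu)$ up to a factor $(\log n)^4$, so that, after averaging over $O(n)$ --- which, by the identity behind \eqref{eq:intro-1}, is all that is needed --- $Z_q(\mu)$ behaves like its Gaussian counterpart $\sqrt{q}\,B_2^n$. Feeding this in replaces the $n^{1/4}$ of \cite{Chasapis-Giannopoulos-Skarmogiannis-2020} by $(\log n)^4$, which together with the $\log n$ from the scales gives the exponent $5$. (Alternatively one could try to decompose $\mu$ itself via Eldan's stochastic localisation into pieces Gaussian enough for the estimate to apply directly, the polylogarithmic loss coming from the covariance control, which is polylogarithmic precisely because the KLS constant is.)

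The step I expect to be the main obstacle is exactly the last one: making precise, uniformly in $q$, how the polylogarithmic bound on the isotropic constant of $\mu$ controls the centroid bodies $Z_q(\mu)$ --- equivalently, the non-Gaussian features of $\mu$ --- with the correct power of $\log n$. The truncation and the dyadic chaining are routine and essentially already in \cite{Chasapis-Giannopoulos-Skarmogiannis-2020}; it is the transfer of the slicing estimate, and the bookkeeping of the exponents it forces, that must be carried out.
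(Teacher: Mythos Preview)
Your plan takes a genuinely different route from the paper, and the gap you yourself flag at the end is real and, as far as I can see, not closed by your outline.

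The paper does \emph{not} redo Bourgain's chaining over the scales $Z_q(\mu)$. It invokes, as a black box, the Eldan--Lehec comparison theorem \cite{Eldan-Lehec-2014}: for any isotropic log-concave $\mu$ and any symmetric convex body $K$,
\[
\int_{\mathbb{R}^n}\|x\|_K\,d\mu(x)\ \ls\ c\,\sqrt{\log n}\,\tau_n\,\int_{\mathbb{R}^n}\|x\|_K\,d\gamma_n(x),
\]
where $\tau_n$ is the ``third moment'' constant $\tau_n^2=\sup_{\mu}\sup_{\xi}\sum_{i,j}\mathbb{E}_{\mu}(x_ix_j\langle x,\xi\rangle)^2$. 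Eldan's thin-shell $\Rightarrow$ spectral-gap work \cite{Eldan-2013} gives $\tau_n^2\ls c\sum_{k\ls n}\sigma_k^2/k$, and Klartag--Lehec \cite{Klartag-Lehec-2022} give $\sigma_k\ls c(\log k)^4$; combined, $\sqrt{\log n}\,\tau_n\ls c(\log n)^5$. The Gaussian integral is $\approx\sqrt{n}\,M(K)$, and that is the whole proof. No truncation, no $Z_q$-decomposition, no Talagrand.

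Your chaining scheme is exactly the argument that produced the $n^{1/4}$ in \cite{Chasapis-Giannopoulos-Skarmogiannis-2020}, and the step where you propose to ``insert the polylogarithmic bounds'' is not a mechanical substitution: the $n^{1/4}$ there does not come from an isotropic-constant factor one can simply replace by $L_n\ls(\log n)^4$. It comes from the interaction of the $\psi_2$-behaviour of linear functionals (or lack thereof) with the chaining, and the slicing bound by itself does not upgrade that. Your sentence about ``averaging over $O(n)$ --- which \dots\ is all that is needed'' is where the argument actually breaks: the statement is for a \emph{fixed} $K$, and the fact that the $O(n)$-average equals $\sqrt{n}\,M(K)$ is precisely the trivial direction \eqref{eq:intro-1}; getting the pointwise bound is the content of the theorem.

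Your parenthetical alternative --- decompose $\mu$ via Eldan's stochastic localisation into near-Gaussian pieces, with covariance control governed by the KLS/thin-shell constant --- is in fact the content of the Eldan--Lehec theorem above, so that route does work, but as a citation rather than something to be reproved here.
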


Theorem~\ref{th:intro-2} follows from the recent developments on the Kannan-Lov\'{a}sz-Simonovits conjecture and the
slicing problem. We simply combine the recent estimate of Klartag and Lehec \cite{Klartag-Lehec-2022} with previous
results of Eldan \cite{Eldan-2013} and, in particular, with a theorem of Eldan and Lehec from \cite{Eldan-Lehec-2014}
which now leads to this strong and general estimate for $I_1(\mu ,K)$. We explain the details in Section~\ref{sec:EL}.
In Section~\ref{sec:upper-bound} we combine Theorem~\ref{th:intro-2} with the approach and method that was
introduced in \cite{Chasapis-Giannopoulos-Skarmogiannis-2020} to obtain Theorem~\ref{th:intro-1}.

\smallskip

In Section~\ref{sec:unit-ball} we make some observations on the geometry of the unit ball ${\mathcal B}_s:={\mathcal B}_s(C^s,K)$
of the norm $\|\cdot\|_{C^s,K}$. By the symmetry of $C$ we easily check that ${\mathcal B}_s$ is an unconditional $1$-symmetric convex
body in ${\mathbb R}^s$. Therefore, one can determine the volume radius of ${\mathcal B}_s$ up to a $\sqrt{\log s}$-factor: we have
$$\frac{c}{\sqrt{\log s}}M({\mathcal B}_s)\ls \frac{1}{{\rm vrad}({\mathcal B}_s)}\ls M({\mathcal B}_s)$$
for any $s\gr 1$, where $c>0$ is an absolute constant. In the case $s\gr c_1n$, where $c_1>1$ is an absolute constant,
a result of Bourgain, Meyer, V.~Milman and Pajor from \cite{BMMP} is equivalent to the assertion
\begin{equation*}M({\mathcal B}_s)\gr c_2L_C\sqrt{n}M(K)\end{equation*}
for some absolute constant $c_2>0$. Under the same assumption, that $s\gr c_1n$, we obtain an upper bound of the same
order, and hence determine $M({\mathcal B}_s)$.

\begin{theorem}\label{th:intro-3}There exist absolute constants $c_i>0$ such that: If $C$ is an isotropic centrally symmetric convex
body in ${\mathbb R}^n$ then, for any centrally symmetric convex body $K$ in ${\mathbb R}^n$ and any $s\gr c_1n$,
$$c_2L_C\sqrt{n}M(K)\ls M({\mathcal B}_s)\ls c_3L_C\sqrt{n}M(K)$$
where ${\mathcal B}_s$ is the unit ball of the norm $\|\cdot\|_{C^s,K}$ on ${\mathbb R}^s$.
\end{theorem}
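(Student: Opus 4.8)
The lower bound $M(\mathcal{B}_s)\gr c_2 L_C\sqrt{n}M(K)$ for $s\gr c_1n$ is, as indicated in the text, a reformulation of a result of Bourgain, Meyer, V.~Milman and Pajor from \cite{BMMP}, so the substance of the theorem is the matching upper bound $M(\mathcal{B}_s)\ls c_3 L_C\sqrt{n}M(K)$. The plan is to bound $M(\mathcal{B}_s)$ by combining the general identity $\int_{O(n)}\|{\bf t}\|_{U(C)^s,K}\,d\nu(U)\approx L_C\sqrt{n}M(K)\|{\bf t}\|_2$ from \eqref{eq:intro-1} with the special symmetry structure of $\mathcal{B}_s$. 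Since $\mathcal{B}_s$ is the unit ball of the norm $\|\cdot\|_{C^s,K}$, its polar $\mathcal{B}_s^{\ast}$ satisfies $\|{\bf t}\|_{\mathcal{B}_s^{\ast}}=\|{\bf t}\|_{C^s,K}$, and by definition $M(\mathcal{B}_s)=\int_{S^{s-1}}\|\xi\|_{\mathcal{B}_s}\,d\sigma(\xi)$. I would therefore want to relate $M(\mathcal{B}_s)$ to an average of $\|{\bf t}\|_{C^s,K}$ over ${\bf t}\in S^{s-1}$ — but this requires care, because $M$ of a body is the spherical average of its own gauge, not of the dual gauge. The right tool here is the fact that $\mathcal{B}_s$ is $1$-symmetric (unconditional and invariant under coordinate permutations), as noted just before the statement of the theorem together with the two-sided estimate $\tfrac{c}{\sqrt{\log s}}M(\mathcal{B}_s)\ls 1/\mathrm{vrad}(\mathcal{B}_s)\ls M(\mathcal{B}_s)$.

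Concretely, I would proceed as follows. First, by Theorem~\ref{th:intro-1} we have $\|{\bf t}\|_{C^s,K}\ls c_1 L_C(\log n)^5\sqrt{n}M(K)\|{\bf t}\|_2$ for all ${\bf t}$, which says $r\,B_2^s\subseteq\mathcal{B}_s$ with $r=(c_1L_C(\log n)^5\sqrt{n}M(K))^{-1}$; dually, $\mathcal{B}_s^{\ast}\subseteq r^{-1}B_2^s$. This already gives one cheap bound on $M(\mathcal{B}_s^{\ast})$, but for $M(\mathcal{B}_s)$ itself I need a lower bound on the gauge, equivalently an \emph{upper} bound on the radius of $\mathcal{B}_s$ in most directions. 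Here is where $s\gr c_1n$ enters and where the BMMP lower bound is used in the reverse direction: for such $s$, the $1$-symmetric body $\mathcal{B}_s$ is — up to constants and by the standard theory of $1$-symmetric bodies — comparable in volume to a ball $\rho\, B_2^s$ with $\rho\approx\mathrm{vrad}(\mathcal{B}_s)$, and the lower bound from \cite{BMMP} pins $\mathrm{vrad}(\mathcal{B}_s)$ from below by $c/(L_C\sqrt{n}M(K))$. Then I would invoke the estimate $M(\mathcal{B}_s)\ls c\,\mathrm{vrad}(\mathcal{B}_s)^{-1}\sqrt{\log s}$ — no, that is the wrong direction too. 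The correct inequality to use is the \emph{upper} one, $M(\mathcal{B}_s)\ls \mathrm{vrad}(\mathcal{B}_s)^{-1}\sqrt{\log s}$, which for $1$-symmetric bodies follows from the lower bound on the volume radius via a result on unconditional bodies; combined with $\mathrm{vrad}(\mathcal{B}_s)\gr c/(L_C\sqrt{n}M(K))$ this would give $M(\mathcal{B}_s)\ls c'\,L_C\sqrt{n}M(K)\sqrt{\log s}$, which has an extra $\sqrt{\log s}$ and so is not yet the claimed bound.

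To remove the logarithm I would instead work directly with \eqref{eq:intro-1}. Rotational averaging shows that $\int_{O(n)}\|{\bf e}\|_{U(C)^s,K}\,d\nu(U)\approx L_C\sqrt{n}M(K)$ for any unit vector ${\bf e}\in S^{s-1}$; taking ${\bf e}=e_1$ and using that $\|{\bf t}\|_{C^s,K}$ depends on $C$ through its isotropic position, one gets that for \emph{some} rotation $U$ the body $\mathcal{B}_s(U(C)^s,K)$ has $\|e_1\|\approx L_C\sqrt{n}M(K)$ — but $\|\cdot\|_{U(C)^s,K}$ is still $1$-symmetric on ${\mathbb R}^s$, so by $1$-symmetry \emph{every} coordinate direction, and hence (by convexity and a standard argument for $1$-symmetric norms) every direction on $S^{s-1}$ up to an absolute constant when $s\gr c_1 n$, has norm comparable to $L_C\sqrt{n}M(K)$. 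This is exactly the regime in which $1$-symmetric norms on ${\mathbb R}^s$ are known to be within an absolute constant of a multiple of the $\ell_2^s$-norm \emph{on the sphere average}, because the $\ell_\infty$-type behaviour only costs a $\sqrt{\log s}$ factor and here the relevant comparison is between $\ell_2$ and the norm, not $\ell_1$ and the norm. Carrying this out: $M(\mathcal{B}_s)=\int_{S^{s-1}}\|\xi\|_{C^s,K}\,d\sigma(\xi)$ directly, and the integrand is, by the preceding, $\asymp L_C\sqrt{n}M(K)$ for a proportion of $\xi$ bounded below by an absolute constant, while the BMMP/Gluskin–Milman lower bound \eqref{eq:GM-basic} controls it from below everywhere and Theorem~\ref{th:intro-1} controls it from above everywhere; a routine splitting of the sphere integral then yields both $M(\mathcal{B}_s)\gr c_2 L_C\sqrt{n}M(K)$ and $M(\mathcal{B}_s)\ls c_3 L_C\sqrt{n}M(K)$.

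The main obstacle, and the step I expect to require the most care, is passing from the averaged estimate \eqref{eq:intro-1} — which only controls $\int_{O(n)}\|e_1\|_{U(C)^s,K}\,d\nu(U)$, i.e.\ an average over rotations of $C$ — to a pointwise-in-$\xi$ two-sided estimate for the \emph{fixed} body $\mathcal{B}_s=\mathcal{B}_s(C^s,K)$ valid for a positive proportion of $\xi\in S^{s-1}$. The leverage for this is precisely the hypothesis $s\gr c_1n$: when the number $s$ of integration variables exceeds the ambient dimension $n$ by a constant factor, the weighted sum $\sum_{j=1}^s t_j x_j$ with $x_j\in C$ behaves, after the reduction in \eqref{eq:basic-identity}, like an isotropic log-concave measure whose $I_1(\cdot,K)$ parameter is insensitive to the direction ${\bf t}/\|{\bf t}\|_2$ up to absolute constants — so the $s\gr c_1n$ regime is exactly the one in which the norm $\|\cdot\|_{C^s,K}$ is genuinely close to a multiple of $\|\cdot\|_2$ and not merely close after averaging. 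Making this "direction-independence" precise, most likely by invoking the concentration of the reduced measures $\mu_{\bf t}$ and the stability of $M(K)$-type averages, is the crux; once it is in hand, the rest is bookkeeping with \eqref{eq:GM-basic}, Theorem~\ref{th:intro-1}, and \eqref{eq:intro-1}.
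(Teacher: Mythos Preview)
Your proposal never actually proves the upper bound; it circles the target and then, in your own words, leaves the crux open. A few concrete problems:

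First, there is an early confusion: $\mathcal{B}_s$ is the unit ball of $\|\cdot\|_{C^s,K}$, so $\|\mathbf{t}\|_{\mathcal{B}_s}=\|\mathbf{t}\|_{C^s,K}$ and $M(\mathcal{B}_s)=\int_{S^{s-1}}\|\xi\|_{C^s,K}\,d\sigma(\xi)$ directly; no duality gymnastics are needed. You eventually notice this, but the detour wastes effort.

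Second, the routes you sketch do not reach the stated bound. Theorem~\ref{th:intro-1} gives a uniform upper bound with a $(\log n)^5$ factor, so averaging it over $S^{s-1}$ cannot give $M(\mathcal{B}_s)\ls c_3 L_C\sqrt{n}M(K)$. The $1$-symmetric/volume-radius route yields an extra $\sqrt{\log s}$, as you acknowledge. And the $O(n)$-average \eqref{eq:intro-1} concerns $U(C)$, not $C$: finding some $U$ with $\|e_1\|_{U(C)^s,K}\approx L_C\sqrt{n}M(K)$ says nothing about $\|\xi\|_{C^s,K}$ for the fixed body $C$. The ``direction-independence for $s\gr c_1n$'' that you invoke is precisely the theorem you are trying to prove, and you do not supply an argument for it.

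The paper's proof is of a completely different nature. It rewrites $\sqrt{s}\,M(\mathcal{B}_s)\approx\int_{C^s}\int_{\Omega}\big\|\sum_{j=1}^s g_j(\omega)x_j\big\|_K\,d\omega\,dx_1\cdots dx_s$, interchanges the integrals, and for each fixed $(x_1,\ldots,x_s)$ views $z\mapsto\langle\sum g_jx_j,z\rangle$ as a Gaussian process on $K^{\circ}$ with covariance $\sum_j\langle x_j,z\rangle^2$. The hypothesis $s\gr c_1n$ enters through the Adamczak--Litvak--Pajor--Tomczak-Jaegermann empirical-covariance theorem: with overwhelming probability in $(x_1,\ldots,x_s)$ one has $\sum_j\langle x_j,z\rangle^2\approx sL_C^2\|z\|_2^2$, and then Slepian's comparison with the standard Gaussian process gives $\int_{\Omega}\|\sum g_jx_j\|_K\,d\omega\ls c\sqrt{s}L_C\sqrt{n}M(K)$. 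On the exceptional set one uses the crude bound $R(C)\ls cnL_C$, which is swallowed by the exponentially small measure. None of these ingredients---the Gaussian representation, ALPT, or Slepian---appears in your proposal.
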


Combining Theorem~\ref{th:intro-3} with Theorem~\ref{th:intro-1} we see that if $s\gr c_1n$
then the critical dimension $k({\mathcal B}_s)$ of ${\mathcal B}_s$ (see \cite[Section~5.3]{AGA-book}) is close to $s$; more precisely,
$$k({\mathcal B}_s)\gr c_4s/(\log n)^{10}.$$
Then, using a well-known deviation inequality for Lipschitz functions on the Euclidean sphere of ${\mathbb R}^s$
we get:

\begin{corollary}\label{cor:intro-4}There exist absolute constants $c_i>0$ such that: If $C$ is an isotropic centrally symmetric convex
body in ${\mathbb R}^n$ then for any centrally symmetric convex body $K$ in ${\mathbb R}^n$ and any $s\gr c_1n$, an $s$-tuple
${\bf t}=(t_1,\ldots ,t_s)$ with $\|{\bf t}\|_2=1$
satisfies
$$c_2L_C\sqrt{n}M(K)\|{\bf t}\|_2\ls \|{\bf t}\|_{{\cal B}_s}\ls c_3L_C\sqrt{n}M(K)\|{\bf t}\|_2$$
with probability greater than $1-\exp (-c_4s/(\log n)^{10})$.
\end{corollary}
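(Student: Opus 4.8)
The plan is to realize the norm $\|\cdot\|_{{\cal B}_s}=\|\cdot\|_{C^s,K}$ as a Lipschitz function on the Euclidean sphere $S^{s-1}\subset{\mathbb R}^s$ and then apply the standard measure-concentration estimate (Lévy's lemma): if $f:S^{s-1}\to{\mathbb R}$ is $L$-Lipschitz with respect to the geodesic (equivalently, up to constants, Euclidean) metric, then for every $u>0$ one has $\sigma_{s-1}\big(\{\xi: |f(\xi)-M_f|\gr u\}\big)\ls c\exp(-c's u^2/L^2)$, where $M_f$ is the Lévy mean (or, up to adjusting constants, the average) of $f$. Here $f(\xi)=\|\xi\|_{{\cal B}_s}$; its Lipschitz constant with respect to $\|\cdot\|_2$ is exactly the smallest $L$ with $\|\xi\|_{{\cal B}_s}\ls L\|\xi\|_2$ for all $\xi$, which by Theorem~\ref{th:intro-1} is at most $c_1L_C(\log n)^5\sqrt{n}M(K)$. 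Thus $L\ls c\,L_C\sqrt{n}M(K)(\log n)^5$.

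Next I would pin down the mean. By the symmetry and $1$-unconditionality of ${\cal B}_s$ (noted in the excerpt), and by \eqref{eq:intro-1} together with the identification of $M({\cal B}_s)$ as $\big(\int_{S^{s-1}}\|\xi\|_{{\cal B}_s}^{-1}d\sigma\big)$-type averages, the spherical average $\int_{S^{s-1}}\|\xi\|_{{\cal B}_s}\,d\sigma(\xi)$ is of the order of $M({\cal B}_s)$; and for $s\gr c_1n$ Theorem~\ref{th:intro-3} gives $M({\cal B}_s)\approx L_C\sqrt{n}M(K)$. Hence the mean (equivalently the median, up to absolute constants, once we also control the variance via the Lipschitz bound) satisfies $M_f\approx L_C\sqrt{n}M(K)$. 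Writing $A:=L_C\sqrt{n}M(K)$, we therefore have $M_f\in[a_1A,a_2A]$ and $L\ls a_3A(\log n)^5$ for absolute constants $a_i$.

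Now I would run the concentration inequality at scale $u=\tfrac12 a_1 A$ (a fixed fraction of the mean): this forces $a_1^2 s u^2/L^2\gr c\, s/(\log n)^{10}$, so that outside an event of probability at most $\exp(-c_4 s/(\log n)^{10})$ one has $|f(\xi)-M_f|\ls \tfrac12 a_1 A$, and hence $\tfrac12 a_1 A\ls \|\xi\|_{{\cal B}_s}\ls (a_2+\tfrac12 a_1)A$. Rescaling a unit vector ${\bf t}$ with $\|{\bf t}\|_2=1$ and reintroducing the factor $\|{\bf t}\|_2$ (the norm is $1$-homogeneous, so the two-sided bound is genuinely $c_2 A\|{\bf t}\|_2\ls\|{\bf t}\|_{{\cal B}_s}\ls c_3A\|{\bf t}\|_2$) yields exactly the claimed statement with $c_2=\tfrac12 a_1$, $c_3=a_2+\tfrac12 a_1$.

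The only real point requiring care is the lower bound inside the good event: the upper Lipschitz bound alone controls deviations, but to get the two-sided conclusion I must know the mean is bounded \emph{below} by a constant multiple of $A$, which is precisely where the lower estimate $M({\cal B}_s)\gr c_2L_C\sqrt{n}M(K)$ of Theorem~\ref{th:intro-3} (i.e.\ the Bourgain--Meyer--Milman--Pajor bound) enters, and where the hypothesis $s\gr c_1n$ is used. Everything else is the routine invocation of Lévy's lemma; the exponent $(\log n)^{10}$ is simply the square of the $(\log n)^5$ coming from Theorem~\ref{th:intro-1}, since concentration scales like $u^2/L^2$. Alternatively, one can phrase this through the critical dimension $k({\cal B}_s)$: the displayed bound $k({\cal B}_s)\gr c_4 s/(\log n)^{10}$ in the excerpt is exactly $c\,(\sqrt s\, \vrad/ b)^2$-type lower bound with $b$ the Lipschitz constant, and Milman's random-version estimates then give the stated probability directly.
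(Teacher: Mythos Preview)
Your proposal is correct and follows essentially the same route as the paper (see Remark~\ref{rem:local}): bound the Lipschitz constant of $\xi\mapsto\|\xi\|_{{\cal B}_s}$ by Theorem~\ref{th:intro-1}, identify the spherical mean with $M({\cal B}_s)\approx L_C\sqrt{n}M(K)$ via Theorem~\ref{th:intro-3}, and apply spherical concentration; the alternative phrasing through $k({\cal B}_s)\gr cs/(\log n)^{10}$ that you mention at the end is exactly how the paper packages the argument. One small cleanup: $M({\cal B}_s)$ \emph{is} by definition the spherical average $\int_{S^{s-1}}\|\xi\|_{{\cal B}_s}\,d\sigma(\xi)$, so the appeal to \eqref{eq:intro-1} and to ``$\|\xi\|_{{\cal B}_s}^{-1}$-type averages'' in your second paragraph is unnecessary and a bit garbled---just delete that clause.
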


We believe that Theorem~\ref{th:intro-2} is a useful result and in Section~\ref{sec:reduction} we
illustrate its use with some additional applications. In the language of convex bodies, Theorem~\ref{th:intro-2}
has the following almost immediate reformulation.

\begin{theorem}\label{th:intro-5}Let $C$ and $K$ be two centrally symmetric convex bodies in ${\mathbb R}^n$.
There exists $T\in SL(n)$ such that
\begin{equation*}\frac{1}{\vol_n(C)}\int_C\|x\|_{TK}dx\ls c(\log n)^{10}\left(\frac{\vol_n(C)}{\vol_n(K)}\right)^{1/n}\end{equation*}
where $c>0$ is an absolute constant.
\end{theorem}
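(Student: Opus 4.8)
The plan is to derive this as a direct translation of Theorem~\ref{th:intro-2} into the language of convex bodies, using the standard correspondence between convex bodies and log-concave measures together with the definition of the isotropic constant. First I would let $\mu_C$ denote the uniform probability measure on a suitable linear image of $C$. Recall that for any centrally symmetric convex body $C$ in $\mathbb{R}^n$ there is $S\in GL(n)$ such that $SC$ is isotropic, and then $\vol_n(SC)^{1/n}\cdot L_C = $ (appropriate normalization); more precisely, writing $\widetilde{C}$ for the isotropic position of $C$, one has $\vol_n(\widetilde{C})=1$ and the uniform measure $\mu$ on $L_C^{-1}\widetilde{C}$ (or the appropriate dilate) is isotropic in the sense that its covariance is the identity. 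The key quantitative point is that if $\mu$ is the isotropic log-concave probability measure obtained by normalizing $C$, then for any linear map the quantity $I_1(\mu, \cdot)$ rescales in a controlled way, and the factor $(\vol_n(C)/\vol_n(K))^{1/n}$ on the right-hand side is exactly what absorbs the discrepancy between the volume normalizations of $C$ and $K$ after one brings $C$ to isotropic position.

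The key steps, in order, are: (i) replace $C$ by its isotropic linear image $\widetilde{C}=S C$ with $S\in GL(n)$, noting that $\frac{1}{\vol_n(C)}\int_C\|x\|_{TK}\,dx = \frac{1}{\vol_n(\widetilde C)}\int_{\widetilde C}\|y\|_{(T S^{-1})K}\,dy$ after the change of variables $y = Sx$, so it suffices to prove the estimate for $\widetilde C$ in place of $C$ and a modified $T$; (ii) let $\mu$ be the isotropic log-concave probability measure supported on $L_{\widetilde C}^{-1}\widetilde C$ (the standard isotropic normalization of the uniform measure on $\widetilde C$), so that $\frac{1}{\vol_n(\widetilde C)}\int_{\widetilde C}\|y\|_{K'}\,dy = L_{\widetilde C}\, I_1(\mu, K')$ for any centrally symmetric body $K'$, exactly as in identity~\eqref{eq:basic-identity} specialized appropriately; (iii) apply Theorem~\ref{th:intro-2} to get $I_1(\mu, K')\ls c_2\sqrt n(\log n)^5 M(K')$; (iv) choose $T$ so that $K' = (TS^{-1})K$ is in the position minimizing $M$, i.e.\ the position where $\sqrt n\, M(K')$ is controlled by $(\vol_n(\widetilde C)/\vol_n(K'))^{1/n}$ up to the recorded logarithmic loss --- here one invokes the reverse Urysohn-type / $M$-position estimates (the $MM^*$-estimate of Pisier, or the fact that in the $\ell$-position $M(K')\cdot\vrad(K') \ls c\log n$) to arrange $\sqrt n\, M(K') \ls c(\log n)\,\vrad(K')^{-1}$ and then rescale $T$ to match volumes; (v) collect the logarithmic factors: $L_{\widetilde C} = L_C$ contributes no $\log n$ loss beyond what is already inside, and combining the $(\log n)^5$ from Theorem~\ref{th:intro-2} with the further logarithmic factors from the choice of position of $K$ yields the stated exponent $(\log n)^{10}$.

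The main obstacle I anticipate is step~(iv): one must choose the single $SL(n)$-map $T$ so that simultaneously (a) $M((TS^{-1})K)$ is small enough to be comparable to the volume ratio and (b) the volume normalization $\vol_n((TS^{-1})K)$ is pinned down, and reconciling these requires picking the right position for $K$ (the $\ell$-position or the $M$-position) and carefully tracking how $\sqrt n\, M(K')$ compares to $\vrad(K')$ in that position --- this is where the extra powers of $\log n$ (beyond the five already present) enter. The scaling bookkeeping to turn $SL(n)$ (rather than $GL(n)$) freedom into the clean volume-ratio form on the right-hand side is routine but must be done with care, since $T$ is constrained to have determinant one while $S$ is not; one absorbs $\det S$ using $\vol_n(\widetilde C)=1$ and $\vol_n(C) = |\det S|^{-1}$, and the homogeneity of $\|\cdot\|_{TK}$ in $K$ then produces exactly the factor $(\vol_n(C)/\vol_n(K))^{1/n}$.
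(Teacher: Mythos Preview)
Your approach is essentially identical to the paper's: reduce to isotropic $C$, pass to the associated isotropic log-concave measure, apply Theorem~\ref{th:intro-2}, and then choose a good $SL(n)$-position of $K$ via Pisier's $MM^{\ast}$ estimate (together with Blaschke--Santal\'{o}) to control $\sqrt{n}\,M(K')$ by $c(\log n)\,\vol_n(K)^{-1/n}$. Steps (i)--(iv) match the paper line for line.

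The one point that needs correcting is your accounting in step~(v). You write that ``$L_{\widetilde C}=L_C$ contributes no $\log n$ loss,'' but this is exactly where four of the ten logarithms come from. After steps (ii)--(iv) you have
\[
\frac{1}{\vol_n(C)}\int_C\|x\|_{TK}\,dx \ \ls\ c\,(\log n)^5\cdot(\log n)\cdot L_C\left(\frac{\vol_n(C)}{\vol_n(K)}\right)^{1/n},
\]
i.e.\ a factor $(\log n)^6 L_C$. To obtain the stated absolute constant times $(\log n)^{10}$ you must invoke the Klartag--Lehec bound $L_C\ls L_n\ls c(\log n)^4$; Pisier's inequality by itself contributes only a single $\log n$, not five. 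With that correction your proof is complete and coincides with the paper's.
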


The inequality of Theorem~\ref{th:intro-5} is a reverse form of the well-known fact (see \cite[Section~2.2]{VMilman-Pajor-1989})
that for any pair of centrally symmetric convex bodies $K$ and $C$ in ${\mathbb R}^n$ one has
\begin{equation*}\frac{1}{\vol_n(C)}\int_C\|x\|_{K}dx\gr \frac{n}{n+1}\left(\frac{\vol_n(C)}{\vol_n(K)}\right)^{1/n}.\end{equation*}
It generalizes the $MM^{\ast}$-estimate (see \cite[Theorem~6.5.1]{AGA-book}, more precisely \cite[Corollary~6.5.3]{AGA-book})
which corresponds to the case $C=B_2^n$.

Our second application is an affirmative answer to a question from \cite{Giannopoulos-Paouris-Vritsiou-2012}
where Giannopoulos, Paouris and Vritsiou presented a reduction of the slicing problem to the study of the parameter
$$I_1(K,Z_q^{\circ }(K))=\int_K\|\langle \cdot ,x\rangle\|_{L_q(K)}dx$$
(where $\{Z_q(K)\}_{q\gr 1}$ is the family of $L_q$-centroid bodies of $K$).
It was shown in \cite{Giannopoulos-Paouris-Vritsiou-2012} that if (for some $1/2\ls s< 1$) one had an upper bound of the form
\begin{equation}\label{eq:I-wanted}I_1(K,Z_q^{\circ }(K))\ls c_1q^s\sqrt{n}L_K^2\end{equation}
for every isotropic convex body $K$ in ${\mathbb R}^n$ and all $1\ls q\ls n$, then it would follow that
$$L_n\ls \frac{c_2\sqrt[4]{n}\log\! n}{q^{\frac{1-s}{2}}},$$ where $L_n:=\max\{ L_K:K\;\hbox{is an
isotropic convex body in}\,{\mathbb R}^n\}$. However, to the best of our knowledge, it is only known
that \eqref{eq:I-wanted} holds true with $s=1$, which via the reduction of \cite{Giannopoulos-Paouris-Vritsiou-2012} resulted in an estimate of the form
$L_n=O(\sqrt[4]{n}(\log n)^b)$. Using Theorem~\ref{th:intro-2} we confirm in Section~\ref{sec:reduction} that \eqref{eq:I-wanted} holds true
with $s=1/2$ up to some factors which are logarithmic in $n$.

\begin{theorem}\label{th:intro-6}Let $K$ be  a convex body of volume $1$ in ${\mathbb R}^n$ with center of mass at the origin.
Then, for every $1\ls q\ls n$ we have that
$$I_1(K,Z_q^{\circ }(K))\ls c\sqrt{qn}(\log n)^7L_K^2$$
where $c>0$ is an absolute constant.
\end{theorem}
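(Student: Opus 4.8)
The plan is to feed the body $Z_q^\circ(K)$ into Theorem~\ref{th:intro-2} after renormalising the uniform measure on $K$ so that it becomes isotropic, and then to recognise the parameter $M(Z_q^\circ(K))$ that appears as one half of the mean width of the $L_q$-centroid body $Z_q(K)$; the latter is then controlled by known estimates on the asymptotic shape of $L_q$-centroid bodies of isotropic bodies. Throughout we use $K$ in its isotropic position, so that the uniform probability measure $\mu_K$ on $K$ is log-concave, centred, and has covariance matrix $L_K^2\cdot\mathrm{Id}$.

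The first steps are bookkeeping. Since $\|x\|_{Z_q^\circ(K)}=h_{Z_q(K)}(x)=\|\langle\cdot,x\rangle\|_{L_q(K)}$, we have $I_1(K,Z_q^\circ(K))=\int_{\mathbb{R}^n}h_{Z_q(K)}(x)\,d\mu_K(x)$. Let $\mu$ be the push-forward of $\mu_K$ under $x\mapsto x/L_K$; then $\mu$ is an isotropic log-concave probability measure on $\mathbb{R}^n$, and by the $1$-homogeneity of $h_{Z_q(K)}$,
$$I_1(K,Z_q^\circ(K))=L_K\int_{\mathbb{R}^n}h_{Z_q(K)}(y)\,d\mu(y)=L_K\,I_1(\mu,Z_q^\circ(K)).$$
As $Z_q^\circ(K)$ is a centrally symmetric convex body, Theorem~\ref{th:intro-2} applies to $\mu$ and $Z_q^\circ(K)$ and gives
$$I_1(K,Z_q^\circ(K))\le c_2\,L_K\,\sqrt{n}\,(\log n)^5\,M(Z_q^\circ(K)),\qquad M(Z_q^\circ(K))=\int_{S^{n-1}}h_{Z_q(K)}(\xi)\,d\sigma(\xi)=\tfrac12\,w(Z_q(K)),$$
where $w(\cdot)$ denotes the mean width.

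It remains to bound the mean width of $Z_q(K)$, and here I would invoke the estimate $w(Z_q(K))\le c\,\sqrt{q}\,(\log n)^2\,L_K$ for $1\le q\le n$ and $K$ isotropic. For $1\le q\le\sqrt{n}$ this holds without any logarithmic loss: from $h_{Z_q(K)}(\theta)^q=\int_K|\langle x,\theta\rangle|^q\,dx$, Jensen's inequality on $S^{n-1}$, Fubini, the elementary bound $\big(\int_{S^{n-1}}|\langle e_1,\theta\rangle|^q\,d\sigma(\theta)\big)^{1/q}\le c\sqrt{q/n}$ and Paouris' inequality $I_q(\mu_K):=\big(\int_K\|x\|_2^q\,dx\big)^{1/q}\le c\sqrt{n}\,L_K$ (valid for $q\le\sqrt{n}$), one gets $w(Z_q(K))\le c\sqrt{q/n}\,I_q(\mu_K)\le c\sqrt{q}\,L_K$. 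For $\sqrt{n}\le q\le n$ this crude Jensen/Hölder argument only yields $c\,q^{3/2}L_K/\sqrt{n}$, which is too weak, and so is the naive comparison $Z_q(K)\subseteq c\tfrac{q}{\sqrt n}Z_{\sqrt n}(K)$; this is the genuinely delicate point, where one has to use the known finer (polylogarithmic) estimates on the mean width and the asymptotic shape of the $L_q$-centroid bodies of isotropic convex bodies in this range. Combining the two displays,
$$I_1(K,Z_q^\circ(K))\le c_2\,L_K\,\sqrt{n}\,(\log n)^5\cdot\tfrac12\,c\,\sqrt{q}\,(\log n)^2\,L_K=c'\sqrt{qn}\,(\log n)^7\,L_K^2,$$
which is the assertion.

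The only substantial step is the mean-width estimate for $Z_q(K)$ in the range $\sqrt{n}\le q\le n$; everything else — the rescaling of $\mu_K$ and the identification of $M(Z_q^\circ(K))$ with $\tfrac12\,w(Z_q(K))$ — is routine once Theorem~\ref{th:intro-2} is in hand.
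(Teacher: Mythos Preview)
Your proof is correct and follows essentially the same route as the paper: rescale the uniform measure on the isotropic $K$ by $1/L_K$ to make it isotropic, apply Theorem~\ref{th:intro-2}, identify $M(Z_q^\circ(K))$ with $w(Z_q(K))$, and then invoke E.~Milman's bound $w(Z_q(K))\ls c\sqrt{q}\log(1+q)\max\{\sqrt{q}\log(1+q)/\sqrt{n},1\}L_K\ls c\sqrt{q}(\log n)^2L_K$ for $1\ls q\ls n$. One cosmetic remark: with the paper's convention $w(C)=\int_{S^{n-1}}h_C\,d\sigma$, one has $M(Z_q^\circ(K))=w(Z_q(K))$ rather than $\tfrac12 w(Z_q(K))$, but this is absorbed into the absolute constant.
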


Note that Theorem~\ref{th:intro-6} establishes \eqref{eq:I-wanted} with $s=\frac{1}{2}$ (up to a factor which is logarithmic
in $n$) in accordance with the fact that the reduction of the slicing problem in \cite{Giannopoulos-Paouris-Vritsiou-2012}
with $q\approx n$ then leads to a poly-logarithmic
in $n$ upper bound for $L_n$. Of course, the order has been now reversed since in the proof of Theorem~\ref{th:intro-6}
we use the results of Eldan, Lehec and Klartag as well as E.~Milman's bounds for the mean width of the
$L_q$-centroid bodies $Z_q(K)$. However, we believe that this application illustrates, once again,
the strength and possible uses of Theorem~\ref{th:intro-2}.

\section{Notation and background information}\label{sec:background}

In this section we introduce notation and terminology that we use throughout this work, and provide background
information on isotropic convex bodies. We write $\langle\cdot ,\cdot\rangle $ for the standard inner product in ${\mathbb R}^n$ and denote the
Euclidean norm by $\|\cdot \|_2$. In what follows, $B_2^n$ is the Euclidean unit ball, $S^{n-1}$ is the unit sphere, and $\sigma $ is the
rotationally invariant probability measure on $S^{n-1}$. Lebesgue measure in ${\mathbb R}^n$ is denoted by ${\rm vol}_n$.
The Grassmann manifold $G_{n,k}$ of all $k$-dimensional subspaces of ${\mathbb R}^n$ is equipped with the Haar probability
measure $\nu_{n,k}$. For every $1\ls k\ls n-1$ and $E\in G_{n,k}$ we write $P_E$ for the orthogonal projection from $\mathbb R^{n}$
onto $E$. The letters $c, c^{\prime },c_j,c_j^{\prime }$ etc. denote absolute positive constants whose value may change from line to line.
Sometimes we might even relax our notation: $a\lesssim b$ will then mean ``$a\ls cb$ for some (suitable) absolute constant $c>0$'',
and $a \approx b$ will stand for ``$a \lesssim b \land a \gtrsim b$". If $A, B$ are sets, $A \approx B$
will similarly state that $c_1A\subseteq B \subseteq c_2 A$ for some absolute constants $c_1,c_2>0$.

A convex body in ${\mathbb R}^n$ is a compact convex set $C\subset {\mathbb R}^n$ with non-empty interior. We say that $C$ is centrally symmetric
if $-C=C$, and that $C$ is centered if its barycenter $\frac{1}{\vol_n(C)}\int_Cx\,dx $ is at the origin. We say that $C$ is unconditional with respect to the standard orthonormal basis $\{e_1,\ldots ,e_n\}$ of ${\mathbb R}^n$ if $x=(x_1,\ldots ,x_n)\in C$ implies that $(\epsilon_1x_1,\ldots ,\epsilon_nx_n)\in C$ for any choice
of signs $\epsilon_i\in\{ -1,1\}$. This is equivalent to the fact that, for any choice of signs $\{\epsilon_i\}_{i=1}^n$
and scalars $t_i$, we have
\begin{equation*}\Big\|\sum_{i=1}^n\epsilon_it_ie_i\Big\| =\Big\|\sum_{i=1}^nt_ie_i\Big\|.\end{equation*}
A centrally symmetric convex body $C$ in ${\mathbb R}^n$ is called $1$-symmetric if the norm induced by $K$ to ${\mathbb R}^n$
is unconditional and permutation invariant, i.e. 
\begin{equation*}\Big\|\sum_{i=1}^n\epsilon_it_ie_{\sigma (i)}\Big \|=\Big\|\sum_{i=1}^nt_ie_i\Big\|\end{equation*}
for any choice of scalars $t_i$, any choice of signs $\{\epsilon_i\}_{i=1}^n$ and any permutation $\sigma $ of $\{1,\ldots ,n\}$.
For any $1\ls p\ls\infty$, the unit ball $B_p^n$ of $\ell_p^n=({\mathbb R}^n,\|\cdot\|_p)$ is a $1$-symmetric convex body.

The radius $R(C)$ of a convex body $C$ is the smallest $R>0$ such that $C\subseteq RB_2^n$.
The volume radius of $C$ is the quantity ${\rm vrad}(C)=\left (\vol_n(C)/\vol_n(B_2^n)\right )^{1/n}$.
Integration in polar coordinates shows that if the origin is an interior point of $C$ then the volume radius of $C$ can be expressed as
\begin{equation*}{\rm vrad}(C)=\Big (\int_{S^{n-1}}\|\xi\|_C^{-n}\,d\sigma (\xi )\Big)^{1/n}\end{equation*}
where $\|\xi\|_C=\inf\{ t>0:\xi\in tC\}$. We also consider the parameter
\begin{equation*}M(C)=\int_{S^{n-1}}\|\xi\|_Cd\sigma (\xi ).\end{equation*}
The support function of $C$ is defined by $h_C(y):=\max \bigl\{\langle x,y\rangle :x\in C\bigr\}$, and
the mean width of $C$ is the average
\begin{equation*}w(C):=\int_{S^{n-1}}h_C(\xi)\,d\sigma (\xi)\end{equation*}
of $h_C$ on $S^{n-1}$. For notational convenience we write $\overline{C}$ for
the homothetic image of volume $1$ of a convex body $C\subseteq
\mathbb R^n$, i.e. $\overline{C}:= \vol_n(C)^{-1/n}C$.

The polar body $C^{\circ }$ of a centrally symmetric convex body $C$ in ${\mathbb R}^n$ is defined by
\begin{equation*}
C^{\circ}:=\bigl\{y\in {\mathbb R}^n: \langle x,y\rangle \ls 1\;\hbox{for all}\; x\in C\bigr\}.
\end{equation*}
The Blaschke-Santal\'{o} inequality states that ${\rm vol}_n(C){\rm vol}_n(C^{\circ })\ls {\rm vol}_n(B_2^n)^2$,
with equality if and only if $C$ is an ellipsoid. The reverse Santal\'{o} inequality of Bourgain and V. Milman
asserts that there exists an absolute constant $c>0$ such
that, conversely,
\begin{equation*}\Big ({\rm vol}_n(C){\rm vol}_n(C^{\circ })\Big )^{1/n}\gr c\,\vol_n(B_2^n)^{2/n}\approx 1/n.\end{equation*}
A convex body $C$ in ${\mathbb R}^n$ is called isotropic if it has volume $1$, it is centered
and its inertia matrix is a multiple of the identity matrix: there exists a constant $L_C >0$ such that
\begin{equation}\label{isotropic-condition}\|\langle \cdot ,\xi\rangle\|_{L_2(C)}^2:=\int_C\langle x,\xi\rangle^2dx =L_C^2\end{equation}
for all $\xi\in S^{n-1}$. The hyperplane conjecture asks if there exists an absolute constant $A>0$ such that
\begin{equation}\label{HypCon}L_n:= \max\{ L_C:C\ \hbox{is isotropic in}\ {\mathbb R}^n\}\ls A\end{equation}
for all $n\gr 1$. Bourgain proved in \cite{Bourgain-1991} that $L_n\ls c\sqrt[4]{n}\log\! n$; later, Klartag \cite{Klartag-2006}
improved this bound to $L_n\ls c\sqrt[4]{n}$. In a breakthrough work, Chen \cite{C} proved that for any $\epsilon >0$
there exists $n_0(\epsilon )\in {\mathbb N}$ such that $L_n\ls n^{\epsilon}$ for every $n\gr n_0(\epsilon )$. Very recently,
Klartag and Lehec \cite{Klartag-Lehec-2022} showed that the hyperplane conjecture and the stronger Kannan-Lov\'{a}sz-Simonovits
isoperimetric conjecture hold true up to a factor that is poly-logarithmic in the dimension; more precisely, they achieved
the bound $L_n\ls c(\log n)^4$, where $c>0$ is an absolute constant. This development will be discussed in Section~\ref{sec:EL}
and, together with previous work of Eldan and Lehec, is the key for our main result.

A Borel measure $\mu$ on $\mathbb R^n$ is called $\log$-concave if $\mu(\lambda
A+(1-\lambda)B) \gr \mu(A)^{\lambda}\mu(B)^{1-\lambda}$ for any compact subsets $A$
and $B$ of ${\mathbb R}^n$ and any $\lambda \in (0,1)$. A function
$f:\mathbb R^n \rightarrow [0,\infty)$ is called $\log$-concave if
its support $\{f>0\}$ is a convex set and the restriction of $\log{f}$ to it is concave.
It is known that if a probability measure $\mu $ is log-concave and $\mu (H)<1$ for every
hyperplane $H$, then $\mu $ has a log-concave density $f_{{\mu }}$. Note that if $C$ is a convex body in
$\mathbb R^n$ then the Brunn-Minkowski inequality implies that
$\mathbf{1}_{C} $ is the density of a $\log$-concave measure.

If $\mu $ is a log-concave measure on ${\mathbb R}^n$ with density $f_{\mu}$, we define the isotropic constant of $\mu $ by
\begin{equation}\label{definition-isotropic}
L_{\mu }:=\left (\frac{\sup_{x\in {\mathbb R}^n} f_{\mu} (x)}{\int_{{\mathbb
R}^n}f_{\mu}(x)dx}\right )^{\frac{1}{n}} [\det \textrm{Cov}(\mu)]^{\frac{1}{2n}}\end{equation}
where $\textrm{Cov}(\mu)$ is the covariance matrix of $\mu$ with entries
\begin{equation}\textrm{Cov}(\mu )_{ij}:=\frac{\int_{{\mathbb R}^n}x_ix_j f_{\mu}
(x)\,dx}{\int_{{\mathbb R}^n} f_{\mu} (x)\,dx}-\frac{\int_{{\mathbb
R}^n}x_i f_{\mu} (x)\,dx}{\int_{{\mathbb R}^n} f_{\mu}
(x)\,dx}\frac{\int_{{\mathbb R}^n}x_j f_{\mu}
(x)\,dx}{\int_{{\mathbb R}^n} f_{\mu} (x)\,dx}.\end{equation} We say
that a $\log $-concave probability measure $\mu $ on ${\mathbb R}^n$
is isotropic if it is centered, i.e. if
\begin{equation}
\int_{\mathbb R^n} \langle x, \xi \rangle d\mu(x) = \int_{\mathbb
R^n} \langle x, \xi \rangle f_{\mu}(x) dx = 0
\end{equation} for all $\xi\in S^{n-1}$, and $\textrm{Cov}(\mu )$ is the identity matrix.

Let $C$ be a centered convex body of volume $1$ in ${\mathbb R}^n$. For every $q\gr 1$,
the $L_q$-centroid body $Z_q(C)$ of $C$ is the centrally symmetric convex body with support function
\begin{equation}\label{Zq-def}h_{Z_q(C)}(y)= \left(\int_C |\langle x,y\rangle|^{q}dx \right)^{1/q}.\end{equation}
From H\"{o}lder's inequality it follows that $Z_1(C)\subseteq Z_p(C)\subseteq Z_q(C)\subseteq Z_{\infty }(C)$ for
all $1\ls p\ls q\ls \infty $, where $Z_{\infty }(C)={\rm conv}\{C,-C\}$.
Using Borell's lemma, one can check that
\begin{equation}\label{eq:Zq-inclusions} Z_q(C)\subseteq c_1\frac{q}{p}Z_p(C)\end{equation}
for all $1\ls p<q$. One can also check that $Z_q(C)\supseteq c_2Z_{\infty }(C)$ for all $q\gr n$.

\smallskip

We refer to the books \cite{AGA-book} and \cite{AGA-book-2} for basic facts from asymptotic convex geometry.
We also refer to \cite{VMilman-Pajor-1989} and \cite{BGVV-book} for more information on isotropic convex bodies
and log-concave probability measures.

\section{The estimate of Eldan and Lehec}\label{sec:EL}

\begin{definition}\label{def:I-one}\rm Let $\mu $ be a centered log-concave probability measure on ${\mathbb R}^n$.
For any convex body $K$ in ${\mathbb R}^n$ with $0\in {\rm int}(K)$ we define
\begin{equation*}I_1(\mu ,K):=\int_{{\mathbb R}^n}\|x\|_Kd\mu (x).\end{equation*}
\end{definition}

In this section we provide an estimate for $I_1(\mu,K)$ in the case where $\mu$ is isotropic.

\begin{proof}[Proof of Theorem~$\ref{th:intro-2}$]
Let $\mu $ be an isotropic log-concave probability measure on ${\mathbb R}^n$.
Eldan and Lehec \cite{Eldan-Lehec-2014} obtained an upper bound for $I_1(\mu ,K)$ which involves the constant
$$\tau_n^2=\sup_{\mu}\,\sup_{\xi\in S^{n-1}}\,\sum_{i,j=1}^n{\mathbb E}_{\mu}(x_ix_j\langle x,\xi\rangle )^2$$
where the first supremum is over all isotropic log-concave probability measures $\mu $ on ${\mathbb R}^n$.

\begin{theorem}[Eldan-Lehec]\label{th:eldan-lehec}Let $\mu $ be an isotropic log-concave probability measure on ${\mathbb R}^n$.
For any centrally symmetric convex body $K$ in ${\mathbb R}^n$ we have that
$$\int_{{\mathbb R}^n}\|x\|_K\,d\mu(x)\ls c_1\sqrt{\log n}\,\tau_n\int_{{\mathbb R}^n}\|x\|_K\,d\gamma_n(x)$$
where $\gamma_n$ is the standard Gaussian measure on ${\mathbb R}^n$ and $c_1>0$ is an absolute constant.
\end{theorem}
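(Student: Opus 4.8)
The plan is to prove the inequality through Eldan's stochastic localization, since this is exactly the device that converts a bound on the normalized third moments of isotropic log-concave measures (the quantity $\tau_n$) into a comparison with the Gaussian. Starting from $\mu_0=\mu$, let $\mu_t$ be the measure with density $f_t(x)\propto f_\mu(x)\exp\!\big(\langle\theta_t,x\rangle-\tfrac t2\|x\|_2^2\big)$, where $d\theta_t=a_t\,dt+dW_t$ for a standard Brownian motion $W_t$, and $a_t$, $A_t$ denote the barycenter and covariance matrix of $\mu_t$. I would quote three facts from the construction: (i) for any fixed integrable test function $g$ the process $t\mapsto\int g\,d\mu_t$ is a martingale, so $I_1(\mu,K)=\mathbb E\,I_1(\mu_{\tau},K)$ for every bounded stopping time $\tau$; (ii) $da_t=A_t\,dW_t$, hence $a_t$ is a martingale with matrix quadratic variation $\int_0^t A_s^2\,ds$; (iii) $f_t$ is $t$-uniformly log-concave, i.e.\ $-\nabla^2\log f_t\succeq tI$, while $dA_t=dN_t-A_t^2\,dt$ with $N_t$ a matrix-valued martingale whose quadratic variation accumulates, as long as $\|A_t\|_{\mathrm{op}}\ls 2$, at a rate $\lesssim\tau_n^2$ (this is where the defining property of $\tau_n$ is used).

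Fix a horizon $t_0:=c_0/(\tau_n^2\log n)$ and the stopping time $\tau^{*}:=\inf\{s:\|A_s\|_{\mathrm{op}}>2\}$. By (i), $I_1(\mu,K)=\mathbb E\,I_1(\mu_{\tau^{*}\wedge t_0},K)$, and by the triangle inequality
$$I_1(\mu_{\tau^{*}\wedge t_0},K)\ls\|a_{\tau^{*}\wedge t_0}\|_K+\mathbb E_{\mu_{\tau^{*}\wedge t_0}}\big\|X-a_{\tau^{*}\wedge t_0}\big\|_K.$$
For the second summand, $\mu_{\tau^{*}\wedge t_0}$ has a density of the form $e^{-V}$ with $\nabla^2V\succeq(\tau^{*}\wedge t_0)I$, so Caffarelli's contraction theorem provides a $1$-Lipschitz map pushing $\mathcal N\big(0,(\tau^{*}\wedge t_0)^{-1}I\big)$ onto $\mu_{\tau^{*}\wedge t_0}$; writing $X-\mathbb E X$ as a symmetrization of two independent copies of this pushforward and using the Lipschitz bound gives $\mathbb E_{\mu_{\tau^{*}\wedge t_0}}\|X-a_{\tau^{*}\wedge t_0}\|_K\ls\sqrt{2/(\tau^{*}\wedge t_0)}\;I_1(\gamma_n,K)$. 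For the first summand, for each $z\in K^{\circ}$ the process $s\mapsto\langle a_{\tau^{*}\wedge s},z\rangle$ is a continuous martingale with quadratic variation at most $4t_0\|z\|_2^2$ (by the definition of $\tau^{*}$), hence sub-Gaussian with that variance proxy; the generic-chaining bound for sub-Gaussian processes together with the majorizing-measure theorem give $\mathbb E\|a_{\tau^{*}\wedge t_0}\|_K=\mathbb E\sup_{z\in K^{\circ}}\langle a_{\tau^{*}\wedge t_0},z\rangle\lesssim\sqrt{t_0}\,\gamma_2(K^{\circ},\|\cdot\|_2)\approx\sqrt{t_0}\;I_1(\gamma_n,K)\ls I_1(\gamma_n,K)$, since $t_0\ls 1$.

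It remains to estimate $\mathbb E\big[(\tau^{*}\wedge t_0)^{-1/2}\big]$. Applying a matrix Freedman/Bernstein inequality to $N$ on $[0,\tau^{*}\wedge\delta]$ (the dimension entering only as a prefactor $n$ through a net on $S^{n-1}$) bounds $\mathbb P[\tau^{*}\ls\delta]\ls 2n\exp\!\big(-c_1/(\tau_n^2\delta)\big)$ for every $\delta\ls t_0$; integrating the tail of $(\tau^{*})^{-1/2}$ and choosing the absolute constant $c_0$ small enough yields $\mathbb E[(\tau^{*}\wedge t_0)^{-1/2}]\ls t_0^{-1/2}+O(n^{-3})\lesssim\tau_n\sqrt{\log n}$. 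Combining the three estimates gives $I_1(\mu,K)\lesssim(1+\tau_n\sqrt{\log n})\,I_1(\gamma_n,K)\lesssim\sqrt{\log n}\;\tau_n\int_{\mathbb R^n}\|x\|_K\,d\gamma_n(x)$, as claimed.

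The step I expect to be the genuine obstacle is making (iii) precise: verifying that, while the localization runs, the quadratic variation of the covariance process is truly controlled by $\tau_n^2$, and then propagating this through a maximal/concentration inequality for the matrix martingale $N$ that costs only a single $\sqrt{\log n}$ factor rather than $\log n$ or a power of $n$; this is the heart of the Eldan--Lehec argument. A secondary point worth flagging is that one must avoid any crude Euclidean estimate such as $\|x\|_K\ls\|x\|_2/r(K)$ on a low-probability event, since $K$ may be arbitrarily eccentric — working with the stopped measure throughout, so that both error terms are expressed directly through $I_1(\gamma_n,K)$ via Caffarelli's theorem and the sub-Gaussian chaining bound, is exactly what sidesteps this.
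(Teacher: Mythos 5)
Note first that the paper does not actually prove this statement: it is quoted in Section~3 as a black-box theorem of Eldan and Lehec \cite{Eldan-Lehec-2014}, and the paper's own contribution in that section (Theorem~\ref{th:intro-2}) consists only of combining it with Eldan's bound $\tau_n^2\lesssim\sum_{k\ls n}\sigma_k^2/k$ and the Klartag--Lehec bound $\sigma_n\lesssim(\log n)^4$. There is therefore no in-paper proof to compare against; what follows checks your reconstruction against the Eldan--Lehec argument itself.

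Your outline is faithful to that argument: stochastic localization, the optional-stopping identity $I_1(\mu,K)=\mathbb{E}\,I_1(\mu_{\tau},K)$, the split into $\|a_{\tau}\|_K$ plus a centered term, the horizon $t_0\approx(\tau_n^2\log n)^{-1}$ chosen so that the covariance process is unlikely to blow up earlier, and a matrix-martingale tail estimate for the stopping time whose variance proxy is controlled by $\tau_n^2$. You also correctly identify the quadratic-variation control of $N_t$ as the technical heart.

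The one step that does not go through as written is the bound on $\mathbb{E}_{\mu_{\tau}}\|X-a_{\tau}\|_K$. Caffarelli's contraction theorem produces a map $T$ that is $1$-Lipschitz with respect to the \emph{Euclidean} metric; after symmetrizing you have $X-X'=T(Y)-T(Y')$, and Lipschitzness only yields $\|T(Y)-T(Y')\|_2\ls\|Y-Y'\|_2$. It does not give $\|T(Y)-T(Y')\|_K\ls\|Y-Y'\|_K$ for a general centrally symmetric $K$: already a linear Euclidean contraction can send a long semi-axis of $K^{\circ}$ onto a short one, so the $K$-norm of an increment can grow. The clean repair is Hargé's convex/log-concave correlation inequality, which states that if $\nu$ is $t$-uniformly log-concave with barycenter $b$ and $h$ is convex then $\mathbb{E}_{\nu}h(X-b)\ls\mathbb{E}_{N(0,t^{-1}I)}h(Z)$; taking $h=\|\cdot\|_K$ gives exactly $\mathbb{E}_{\mu_{\tau}}\|X-a_{\tau}\|_K\ls\tau^{-1/2}\int\|x\|_K\,d\gamma_n(x)$, with no loss of constant. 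Alternatively, you can bypass both Caffarelli and Hargé by reusing the chaining argument you already deploy for $a_{\tau}$: since $\mu_{\tau}$ is $\tau$-uniformly log-concave it enjoys dimension-free Gaussian concentration, so $z\mapsto\langle X-a_{\tau},z\rangle$ has sub-Gaussian increments at scale $\tau^{-1/2}\|z-z'\|_2$ and Talagrand's comparison theorem yields the same bound up to a universal constant. Either patch closes the gap; the symmetrization-through-Caffarelli derivation, as stated, does not.
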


A result of Eldan \cite{Eldan-2013} relates the constant $\tau_n$ with the thin-shell constant
$$\sigma_n=\sup_{\mu}\sqrt{{\rm Var}_{\mu}(|x|)}$$
where the supremum is over all isotropic log-concave probability measures $\mu $ on ${\mathbb R}^n$. Eldan proved that
$$\tau_n^2\ls c_2\sum_{k=1}^n\frac{\sigma_k^2}{k}$$
where $c_2>0$ is an absolute constant. On the other hand, Klartag and Lehec \cite{Klartag-Lehec-2022} have obtained a poly-logarithmic
in $n$ upper bound for $\sigma_n$: their main result states that
$$\sigma_n\ls c_3(\log n)^4$$
where $c_3>0$ is an absolute constant. Combining these estimates, one gets
$$\tau_n^2\ls c_2\sum_{k=1}^n\frac{\sigma_k^2}{k}\ls c_4(\log n)^9.$$
Therefore, the estimate of Eldan and Lehec immediately implies that
$$I_1(\mu ,K):=\int_{{\mathbb R}^n}\|x\|_K\,d\mu(x)\ls c_5(\log n)^5\,\int_{{\mathbb R}^n}\|x\|_K\,d\gamma_n(x)$$
where $c_5>0$ is an absolute constant. Finally, integration in polar coordinates shows that
$$\int_{{\mathbb R}^n}\|x\|_K\,d\gamma_n(x)\approx \sqrt{n}\int_{S^{n-1}}\|\xi\|_K\,d\sigma(\xi )\approx \sqrt{n}M(K)$$
and hence the proof of Theorem~\ref{th:intro-2} is complete.\end{proof}

\section{Proof of the upper bound}\label{sec:upper-bound}

In this section we provide the proof of Theorem~\ref{th:intro-1}. It combines the approach of \cite{Chasapis-Giannopoulos-Skarmogiannis-2020}
with Theorem~\ref{th:intro-2}.

\begin{proof}[Proof of Theorem~$\ref{th:intro-1}$]Let $C$ be an isotropic centrally symmetric convex body in ${\mathbb R}^n$ and $X_1,\ldots ,X_s$ be independent random vectors, uniformly distributed in $C$.
For any ${\bf t}=(t_1\ldots ,t_s)\in {\mathbb R}^s$ we write $\nu_{{\bf t}}$ for the distribution of the random vector $t_1X_1+\cdots +t_sX_s$.
Since $\|{\bf t}\|_{C^s,K}$ is a norm, we may always assume that $\|{\bf t}\|_2=1$. Note that $\nu_{{\bf t}}$ is an even log-concave probability measure on ${\mathbb R}^n$ (this is a consequence of the Pr\'{e}kopa-Leindler inequality; see \cite{AGA-book}). We write $g_{{\bf t}}$ for the density of $\nu_{{\bf t}}$. Our starting point is the next observation from \cite{Chasapis-Giannopoulos-Skarmogiannis-2020}.

\begin{lemma}\label{lem:identity}
For any ${\bf t}=(t_1\ldots ,t_s)\in {\mathbb R}^s$,
we write $\nu_{{\bf t}}$ for the distribution of the random vector $t_1X_1+\cdots +t_sX_s$. Then,
\begin{equation*}\|{\bf t}\|_{C^s,K}=\int_{{\mathbb R}^n}\|x\|_Kd\nu_{{\bf t}}(x).\end{equation*}
\end{lemma}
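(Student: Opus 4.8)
The plan is to unwind the definitions: the asserted identity is just the statement that the $s$-fold integral over $C$ computes the expectation of $\bigl\|\sum_{j=1}^st_jX_j\bigr\|_K$, combined with the change-of-variables formula for pushforward measures.

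First I would record that, since $\vol_n(C)=1$, each $X_j$ has law equal to the probability measure with density $\mathbf{1}_{C}$ on ${\mathbb R}^n$; by independence, $(X_1,\ldots,X_s)$ has law $\mathbf{1}_{C}(x_1)\cdots\mathbf{1}_{C}(x_s)\,dx_1\cdots dx_s$ on $({\mathbb R}^n)^s$. Hence, for every nonnegative Borel function $F$ on $({\mathbb R}^n)^s$, Tonelli's theorem gives
\begin{equation*}{\mathbb E}\,F(X_1,\ldots,X_s)=\int_C\cdots\int_C F(x_1,\ldots,x_s)\,dx_1\cdots dx_s,\end{equation*}
the iterated integral being independent of the order of integration. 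Applying this with $F(x_1,\ldots,x_s)=\bigl\|\sum_{j=1}^st_jx_j\bigr\|_K$, which is continuous and nonnegative, yields
\begin{equation*}\|{\bf t}\|_{C^s,K}={\mathbb E}\,\Bigl\|\sum_{j=1}^st_jX_j\Bigr\|_K.\end{equation*}

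Next, by the very definition of $\nu_{{\bf t}}$ as the distribution of the ${\mathbb R}^n$-valued random vector $Y:=\sum_{j=1}^st_jX_j$, the change-of-variables formula for pushforward measures gives ${\mathbb E}\,g(Y)=\int_{{\mathbb R}^n}g(x)\,d\nu_{{\bf t}}(x)$ for every nonnegative Borel function $g$ on ${\mathbb R}^n$. Taking $g=\|\cdot\|_K$ and combining with the previous display proves the lemma. It is worth noting, although it is not needed for the identity itself, that $\nu_{{\bf t}}$ is an even log-concave probability measure, being the law of a sum of independent vectors each uniformly distributed on the symmetric convex body $C$; this is a consequence of the Pr\'{e}kopa--Leindler inequality together with the stability of log-concavity under convolution and linear images.

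There is no real obstacle here: the argument is pure bookkeeping. The only point requiring a word of care is measurability and finiteness, and both are immediate: since $0\in{\rm int}(K)$ the Minkowski functional $x\mapsto\|x\|_K$ is finite and continuous on ${\mathbb R}^n$, and $C$ is compact, so $F$ is bounded on the (compact) support of the product measure; thus every integral above is finite and Tonelli's theorem applies without qualification.
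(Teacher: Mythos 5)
Your proof is correct and is the natural argument: the identity is simply the combination of Tonelli's theorem, recognizing the iterated integral over $C^s$ as the expectation of $\bigl\|\sum_j t_jX_j\bigr\|_K$, with the change-of-variables (pushforward) formula defining $\nu_{\mathbf t}$. The paper itself states the lemma without proof, citing \cite{Chasapis-Giannopoulos-Skarmogiannis-2020}, so there is nothing to diverge from; the bookkeeping you supply, including the remark on continuity and finiteness of $\|\cdot\|_K$ on the compact support, is exactly what is needed.
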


It is easily verified that the covariance matrix ${\rm Cov}(\nu_{{\bf t}})$ of $\nu_{{\bf t}}$ is a multiple of the identity: more precisely,
\begin{equation*}{\rm Cov}(\nu_{{\bf t}})=L_C^2\,I_n.\end{equation*}
It follows that the function $f_{{\bf t}}(x)=L_C^ng_{{\bf t}}(L_Cx)$ is the density
of an isotropic log-concave probability measure $\mu_{{\bf t}}$ on ${\mathbb R}^n$. Indeed, we have
\begin{equation*}\int_{{\mathbb R}^n}f_{{\bf t}}(x)x_ix_j\,dx=L_C^n\int_{{\mathbb R}^n}g_{{\bf t}}(L_Cx)x_ix_j\,dx
=L_C^{-2}\int_{{\mathbb R}^n}g_{{\bf t}}(y)y_iy_j\,dy=\delta_{ij}\end{equation*}
for all $1\ls i,j\ls n$.

\begin{note*}It is proved in \cite{Chasapis-Giannopoulos-Skarmogiannis-2020} that if $\|{\bf t}\|_2=1$ then $\|g_{{\bf t}}\|_{\infty }\ls e^n$.
From this inequality we see that
\begin{equation*}L_{\mu_{{\bf t}}}=\|f_{{\bf t}}\|_{\infty }^{\frac{1}{n}}=L_C\|g_{{\bf t}}\|_{\infty }^{\frac{1}{n}}\ls eL_C\end{equation*}
for all ${\bf t}\in {\mathbb R}^s$ with $\|{\bf t}\|_2=1$.
\end{note*}

We compute
\begin{equation*}\|{\bf t}\|_{C^s,K}=\int_{{\mathbb R}^n}\|x\|_K\,d\nu_{{\bf t}}(x)=L_C^{-n}\int_{{\mathbb R}^n}\|x\|_Kf_{{\bf t}}(x/L_C)\,dx
= L_C\int_{{\mathbb R}^n}\|y\|_Kd\mu_{{\bf t}}(y)\end{equation*}
and hence we get
\begin{equation}\label{eq:isotropic-identity}\|{\bf t}\|_{C^s,K}=L_C\, I_1(\mu_{{\bf t}},K)\end{equation}
for all ${\bf t}\in {\mathbb R}^s$ with $\|{\bf t}\|_2=1$. Now, we use Theorem~\ref{th:intro-2} to estimate
$I_1(\mu_{{\bf t}},K)$. As a result, we obtain the upper bound
$$\|{\bf t}\|_{C^s,K}\ls c_1L_C(\log n)^5\,\sqrt{n}M(K)$$
which is the assertion of Theorem~\ref{th:intro-1}.\end{proof}

\begin{remark}\label{rem:M-problem}\rm We mentioned in the introduction that the question to estimate
the parameter $M(K)$ for an isotropic centrally symmetric convex body $K$ in ${\mathbb R}^n$ remains open.
The currently best known estimates are due to Giannopoulos and E.~Milman (see \cite{Giannopoulos-EMilman-2014};
also, \cite{GSTV} for previous work on this question). In view of the recent developments on the slicing
problem, we briefly recall the main computations in their argument and make some slight modifications to get
the next result.

\begin{theorem}[Giannopoulos-E.~Milman]\label{th:GM-1}Let $K$ be an isotropic centrally symmetric convex body in ${\mathbb R}^n$. Then,
\begin{equation}\label{eq:em-01}M(K)\ls \frac{cn^{1/3}(\log n)^{5/3}}{\sqrt{n}L_K}\end{equation}
where $c>0$ is an absolute constant.
\end{theorem}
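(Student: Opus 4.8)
The plan is to estimate $M(K)$ for an isotropic centrally symmetric convex body $K$ by splitting the sphere integral $M(K)=\int_{S^{n-1}}\|\xi\|_K\,d\sigma(\xi)$ according to a threshold on $\|\xi\|_K$, and to control the tail using a covering/volumetric argument together with the known inclusions $Z_q(K)\supseteq c\,Z_\infty(K)$ for $q\gr n$ and the standard fact that for an isotropic body one has $Z_q(K)\supseteq c\,q\,\B_2^n$ for all $q$ in a suitable range (equivalently $R(Z_q^\circ(K))\ls c/q$). The natural parameter to optimize is a radius $t>0$: on the set $\{\xi:\|\xi\|_K\ls t\}$ one trivially bounds the contribution by $t$, while on the complementary set one writes $\|\xi\|_K\mathbf 1_{\{\|\xi\|_K>t\}}$ and integrates, using that $\sigma(\{\|\xi\|_K>t\})$ is essentially the normalized volume of $S^{n-1}\setminus (\tfrac1t)K^\circ$, which decays like $(\vrad(K^\circ)t)^{-n}$-type quantities once $t$ is above $1/\vrad(K^\circ)$.

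First I would recall the Giannopoulos--E.~Milman estimates on the geometry of $Z_q(K)$ for isotropic $K$: by E.~Milman's bounds on mean width of $L_q$-centroid bodies one has $w(Z_q(K))\ls c\sqrt{q}\sqrt{n}L_K$ for $1\ls q\ls n$, and combining with the reverse Santal\'o inequality and the regularity of the family $\{Z_q(K)\}$ one obtains a lower bound for $\vrad(Z_q(K))$, hence an upper bound for $M(Z_q^\circ(K))$ via the low-$M^\ast$-type relation $M(Z_q^\circ(K))\cdot w(Z_q(K))\gtrsim 1$ (or its Urysohn consequence). Then, since $Z_\infty(K)=\conv\{K,-K\}=K$ (as $K$ is symmetric) and $Z_q(K)\subseteq c\,q\,Z_1(K)\subseteq$ (something comparable to $K$), one gets, for the right choice $q\approx n$, that $\|\xi\|_K$ is comparable to $\|\xi\|_{Z_q(K)}$ up to a logarithmic factor on most of the sphere, so that $M(K)\lesssim (\log n)^{a} M(Z_q(K))$ for $q\approx n$. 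Finally I would plug in the bound $w(Z_n(K))\approx \sqrt n\cdot\sqrt n L_K = nL_K$ and the dual mean-width estimate, optimize the split parameter, and arrive at $M(K)\ls c\,n^{1/3}(\log n)^{5/3}/(\sqrt n L_K)$; the exponent $1/3$ emerges from balancing a term of order $t$ against a term of order $(\text{const}/t)^{2}\cdot nL_K$ (schematically), which is exactly the kind of two-term optimization that produces $n^{1/3}$.

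The main obstacle will be obtaining the correct power of $\log n$. The slight modifications the remark alludes to presumably consist of feeding the Klartag--Lehec bound $L_n\ls c(\log n)^4$ (and the thin-shell bound $\sigma_n\ls c(\log n)^4$) into the places where Giannopoulos and E.~Milman originally had $L_n=O(n^{1/4})$ or cruder thin-shell input; one must track how these logarithmic savings propagate through the covering estimate and the optimization to land on the stated exponent $5/3$ rather than something larger. A secondary technical point is making the "$\|\xi\|_K\approx\|\xi\|_{Z_q(K)}$ on most of $S^{n-1}$'' step precise: one needs a Markov-type bound showing that $\{\xi:\|\xi\|_K \gg (\log n)^{?}\|\xi\|_{Z_q(K)}\}$ has exponentially small measure, which follows from Borell's lemma and the inclusions \eqref{eq:Zq-inclusions}, but the bookkeeping of constants and logarithms there has to be done carefully to avoid degrading the final exponent.
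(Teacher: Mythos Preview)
Your route is not the one the paper takes, and as written it does not close. The paper does \emph{not} split $M(K)$ by a threshold on $\|\xi\|_K$ or pass through $Z_q(K)$. It invokes a ready-made inequality from \cite[Corollary~4.4]{Giannopoulos-EMilman-2014}: for any centrally symmetric $K\supseteq rB_2^n$,
\[
\sqrt{n}\,M(K)\ \ls\ c\sum_{k=1}^{n}\frac{1}{\sqrt{k}}\,\min\Big\{\frac{1}{r},\ \frac{n}{k}\log\Big(e+\frac{n}{k}\Big)\frac{1}{v_k^-(K)}\Big\},
\qquad v_k^-(K)=\inf_{E\in G_{n,k}}\vrad(P_E(K)).
\]
For isotropic $K$ one takes $r=L_K$ and, via the section-volume bound $\vol(K\cap E^\perp)^{1/k}\ls cL_k/L_K$ together with Rogers--Shephard, gets $v_k^-(K)\gr c\sqrt{k}\,L_K/L_k$. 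Feeding in the Klartag--Lehec bound $L_k\ls c(\log k)^4$ and splitting the sum at $k_n=n^{2/3}(\log n)^{10/3}$ balances the two branches of the minimum and yields $\sqrt{n}M(K)\ls c\,n^{1/3}(\log n)^{5/3}/L_K$. The exponent $1/3$ arises from equating $1/\sqrt{k}$ with $n(\log n)^5/k^2$, not from a two-term optimization in a threshold parameter $t$ on the sphere.

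Your sketch, by contrast, never produces a concrete inequality to optimize. The line ``balance a term of order $t$ against a term of order $(\mathrm{const}/t)^2\cdot nL_K$'' is asserted, not derived; nothing in the preceding paragraphs explains where the exponent $2$ on $1/t$ would come from, and a naive tail bound $\sigma(\|\xi\|_K>t)\lesssim (\vrad(K^\circ)t)^{-n}$ decays far too fast to interact with $t$ in that way. The detour through $Z_q(K)$ is also circular here: for symmetric isotropic $K$ one has $Z_n(K)\approx K$, so ``$M(K)\lesssim(\log n)^a M(Z_n(K))$'' gives you back $M(K)$, and E.~Milman's mean-width bound controls $w(Z_q(K))$, not $M(Z_q(K))$. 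The missing idea is precisely the projection-based summation formula above, which encodes a multi-scale decomposition rather than a single threshold; without it (or an equivalent device) you will not reach $n^{1/3}$.
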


\begin{proof}It is proved in \cite[Corollary~4.4]{Giannopoulos-EMilman-2014} that for every centrally symmetric convex body $K$ in
${\mathbb R}^n$ with $K \supseteq r B_2^n$, one has
\begin{equation}\label{eq:general-M-symmetric}\sqrt{n} M(K) \ls c_1\sum_{k=1}^{n}  \frac{1}{\sqrt{k}} \min\left\{\frac{1}{r} , \frac{n}{k} \log\Big(e + \frac{n}{k}\Big)\frac{1}{v_{k}^{-}(K)}\right\}\end{equation}
where
\begin{equation*}v^{-}_k(K) := \inf \set{ \vrad(P_E (K)) : E \in G_{n,k}}.\end{equation*}
Now, since $K$ is isotropic we know that $K\supseteq L_KB_2^n$, therefore we may use \eqref{eq:general-M-symmetric} with
$r=L_K$. We also know (see \cite[Prooposition~5.1.15]{BGVV-book}) that
\begin{equation*}\vol (K\cap E^{\perp})^{1/k}\ls c_2\frac{L_k}{L_K}\end{equation*}
for every $E\in G_{n,k}$. Applying the Rogers-Shephard inequality we see that for every $E\in G_{n,k}$ we have
$$\vol (P_E(K))^{1/k}\gr \frac{1}{\vol (K\cap E^{\perp })^{1/k}}\gr  c_3\frac{L_K}{L_k}$$
for every $E\in G_{n,k}$. Therefore, ${\rm vrad}(P_E(K))\gr c_4\sqrt{k}L_K/L_k$, which gives
$$v_k^-(K)\gr c_4\sqrt{k}L_K/L_k.$$ Set $k_n=n^{2/3}(\log n)^{10/3}$. Inserting the above estimates into
\eqref{eq:general-M-symmetric} and using the fact that $L_k=O((\log n)^4)$, we get
\begin{align*}\sqrt{n}M(K) &\ls \frac{c_5}{L_K}\sum_{k=1}^n\frac{1}{\sqrt{k}}\min\left\{ 1,\frac{n(\log n)^5}{k^{3/2}}\right\}\ls
\frac{c_6}{L_K}\left(\sum_{k=1}^{k_n}\frac{1}{\sqrt{k}}+\sum_{k=k_n}^n\frac{n(\log n)^5}{k^2}\right)\\
&\approx \frac{n^{1/3}(\log n)^{5/3}}{L_K}.
\end{align*}
This proves the theorem.\end{proof}
\end{remark}

\section{Geometry of the unit ball}\label{sec:unit-ball}

Let $C$ and $K$ be centrally symmetric convex bodies in ${\mathbb R}^n$ and assume that $C$ is isotropic.
In this section we fix $C$ and $K$, and write ${\mathcal B}_s:={\mathcal B}_s(C^s,K)\subset {\mathbb R}^s$ for the unit ball of the norm
\begin{equation*}\|{\bf t}\|_{C^s,K}=\int_C\cdots\int_C\Big\|\sum_{j=1}^st_jx_j\Big\|_K\,dx_s\cdots dx_1.\end{equation*}
By the symmetry of $C$ we easily check that $\|\cdot\|_{C^s,K}$ is an unconditional norm. Moreover, it is $1$-symmetric. Therefore,
${\mathcal B}_s$ is a $1$-symmetric convex body in ${\mathbb R}^s$ and hence it is known that it satisfies the $MM^{\ast}$-estimate
\begin{equation}\label{eq:geometry-1}M({\mathcal B}_s)w({\mathcal B}_s)\ls c\sqrt{\log s}\end{equation}
where $c>0$ is an absolute constant (see \cite{Tomczak-book}). We also recall the general bounds
\begin{equation}\label{eq:geometry-2}\frac{1}{M({\mathcal B}_s)}\ls {\rm vrad}({\mathcal B}_s)\ls w({\mathcal B}_s)\end{equation}
that hold for any convex body which contains the origin as an interior point.

\begin{lemma}\label{lem:N-0}For any $s\gr 1$ we have that $$\frac{c_1}{\sqrt{\log s}}M({\mathcal B}_s)\ls\frac{c_2}{w({\mathcal B}_s)}\ls \frac{1}{\sqrt{s}}\Big\|\sum_{i=1}^se_i\Big\|_{{\mathcal B}_s}\approx \frac{1}{{\rm vrad}({\mathcal B}_s)}\ls M({\mathcal B}_s)$$
where $c_1,c_2>0$ are absolute constants.
\end{lemma}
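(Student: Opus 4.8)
The plan is to reduce the whole chain to the single quantity $a:=\|\sum_{i=1}^se_i\|_{{\mathcal B}_s}$, so that the middle term is literally $a/\sqrt s$, and to exploit that both ${\mathcal B}_s$ and its polar ${\mathcal B}_s^\circ$ are $1$-symmetric. The chain has four links: the first, $\frac{c_1}{\sqrt{\log s}}M({\mathcal B}_s)\ls\frac{c_2}{w({\mathcal B}_s)}$, is a rearrangement of the $MM^{\ast}$-estimate \eqref{eq:geometry-1}; the last, $\frac1{\vrad({\mathcal B}_s)}\ls M({\mathcal B}_s)$, is one of the general bounds \eqref{eq:geometry-2}; so the content is in the two middle relations $\frac{c_2}{w({\mathcal B}_s)}\ls a/\sqrt s$ and $a/\sqrt s\approx \frac1{\vrad({\mathcal B}_s)}$.

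First I would record two inclusions of cubes. Since $\|\cdot\|_{{\mathcal B}_s}$ is $1$-symmetric, for every $\epsilon\in\{-1,1\}^s$ the vector $\frac1a(\epsilon_1,\dots,\epsilon_s)$ has ${\mathcal B}_s$-norm equal to $\frac1a\|\sum_i\epsilon_ie_i\|_{{\mathcal B}_s}=\frac1a\|\sum_ie_i\|_{{\mathcal B}_s}=1$, hence lies in ${\mathcal B}_s$; since these are the vertices of $\frac1aB_\infty^s$, convexity gives $\frac1aB_\infty^s\subseteq{\mathcal B}_s$. Because polarity preserves unconditionality and permutation invariance, ${\mathcal B}_s^\circ$ is again $1$-symmetric, so the same argument gives $\frac1bB_\infty^s\subseteq{\mathcal B}_s^\circ$, where $b:=\|\sum_ie_i\|_{{\mathcal B}_s^\circ}=h_{{\mathcal B}_s}(\sum_ie_i)$. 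I would also use the identity $ab=s$: indeed $b=\sup\{\sum_ix_i:x\in{\mathcal B}_s\}$, and averaging a maximizer over permutations of the coordinates (${\mathcal B}_s$ is convex and permutation invariant) shows that this supremum is attained at a multiple of $\sum_ie_i$, necessarily at $a^{-1}\sum_ie_i$, so $b=s/a$.

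For $\frac{c_2}{w({\mathcal B}_s)}\ls a/\sqrt s$: from $\frac1aB_\infty^s\subseteq{\mathcal B}_s$ we get $h_{{\mathcal B}_s}(\theta)\gr\frac1a\|\theta\|_1$ for every $\theta$, hence $w({\mathcal B}_s)\gr\frac1a\int_{S^{s-1}}\|\theta\|_1\,d\sigma=\frac sa\int_{S^{s-1}}|\theta_1|\,d\sigma\gr\frac{c\sqrt s}{a}$, using the standard fact $\int_{S^{s-1}}|\theta_1|\,d\sigma\approx s^{-1/2}$. For $a/\sqrt s\approx\frac1{\vrad({\mathcal B}_s)}$: on one hand $\frac1aB_\infty^s\subseteq{\mathcal B}_s$ gives $\vol_s({\mathcal B}_s)\gr(2/a)^s$, so $\vrad({\mathcal B}_s)\gr\frac2a\vol_s(B_2^s)^{-1/s}\gr\frac{c\sqrt s}{a}$, recalling $\vol_s(B_2^s)^{1/s}\approx s^{-1/2}$; on the other hand $\frac1bB_\infty^s\subseteq{\mathcal B}_s^\circ$ gives $\vol_s({\mathcal B}_s^\circ)\gr(2/b)^s$, and then the Blaschke-Santal\'{o} inequality yields $\vol_s({\mathcal B}_s)\ls\vol_s(B_2^s)^2/\vol_s({\mathcal B}_s^\circ)\ls\vol_s(B_2^s)^2(b/2)^s$, whence $\vrad({\mathcal B}_s)\ls\frac b2\vol_s(B_2^s)^{1/s}\ls\frac{c'b}{\sqrt s}=\frac{c'\sqrt s}{a}$ by $ab=s$. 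These two bounds give $\frac1{\vrad({\mathcal B}_s)}\approx a/\sqrt s=\frac1{\sqrt s}\|\sum_ie_i\|_{{\mathcal B}_s}$, and concatenating the four links proves the lemma.

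The routine inputs are the asymptotics $\vol_s(B_2^s)^{1/s}\approx s^{-1/2}$ and $\int_{S^{s-1}}|\theta_1|\,d\sigma\approx s^{-1/2}$. The step I expect to be the crux is the upper bound $\vrad({\mathcal B}_s)\ls\sqrt s/a$: Urysohn's inequality $\vrad\ls w$ is too weak for it (already for $B_1^s$ it overshoots by a $\sqrt{\log s}$ factor), so one has to go through the polar body and the Blaschke-Santal\'{o} inequality, which in turn makes the identity $h_{{\mathcal B}_s}(\sum_ie_i)=s/\|\sum_ie_i\|_{{\mathcal B}_s}$ (the source of $ab=s$) genuinely essential.
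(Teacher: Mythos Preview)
Your proof is correct, and it actually takes a more elementary route than the paper's. The paper establishes the central equivalence $\frac{1}{\sqrt{s}}\big\|\sum_i e_i\big\|_{{\mathcal B}_s}\approx \frac{1}{\vrad({\mathcal B}_s)}$ by invoking the Bobkov--Nazarov inclusions $c_1|{\mathcal B}_s|^{1/s}B_\infty^s\subseteq{\mathcal B}_s\subseteq c_2s|{\mathcal B}_s|^{1/s}B_1^s$ for isotropic unconditional bodies, and then reads off both inequalities at once by evaluating at $t_1=\cdots=t_s=1$; the remaining links are dispatched via \eqref{eq:geometry-1} and \eqref{eq:geometry-2}. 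You instead prove the two cube inclusions $\frac{1}{a}B_\infty^s\subseteq{\mathcal B}_s$ and $\frac{1}{b}B_\infty^s\subseteq{\mathcal B}_s^\circ$ directly from $1$-symmetry, supply the duality identity $ab=s$, and close the upper bound on $\vrad({\mathcal B}_s)$ with Blaschke--Santal\'{o}. This avoids the Bobkov--Nazarov black box entirely, at the cost of a slightly longer argument; conversely, the paper's approach gives the stronger structural sandwich for ${\mathcal B}_s$ for free. One minor redundancy: your separate proof of $c_2/w({\mathcal B}_s)\ls a/\sqrt{s}$ via the cube inclusion is not needed once you have $a/\sqrt{s}\approx 1/\vrad({\mathcal B}_s)$, since $1/w({\mathcal B}_s)\ls 1/\vrad({\mathcal B}_s)$ already follows from \eqref{eq:geometry-2}.
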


\begin{proof}Since ${\mathcal B}_s$ is $1$-symmetric, we know that $\overline{{\mathcal B}_s}:=|{\mathcal B}_s|^{-1/s}{\mathcal B}_s$ is isotropic.
A well-known result of Bobkov and Nazarov (see \cite[Chapter~4]{BGVV-book}) shows that
$$c_1|{\mathcal B}_s|^{1/s}B_{\infty }^s\subseteq {\mathcal B}_s\subseteq c_2s|{\mathcal B}_s|^{1/s}B_1^s.$$
Equivalently, for all $t_1,\ldots ,t_s\in {\mathbb R}$ we have that
$$c_2^{\prime }|{\mathcal B}_s|^{-1/s}\cdot \frac{1}{s}\sum_{i=1}^s|t_i|\ls \Big\|\sum_{i=1}^st_ie_i\Big\|_{{\mathcal B}_s}
\ls c_1^{\prime }|{\mathcal B}_s|^{-1/s}\cdot \max_{1\ls i\ls s}|t_i|.$$
Choosing $t_1=\cdots =t_s=1$ we get the equivalence
$$\frac{1}{\sqrt{s}}\Big\|\sum_{i=1}^se_i\Big\|_{{\mathcal B}_s}\approx \frac{1}{{\rm vrad}({\mathcal B}_s)}.$$
The remaining assertions of the lemma follow from \eqref{eq:geometry-1} and \eqref{eq:geometry-2}.
\end{proof}

The following theorem is proved in \cite{BMMP}.

\begin{theorem}[Bourgain-Meyer-V.~Milman-Pajor]\label{th:BMMP-2}There exist absolute constants $c_i>0$ such that, if $C$ is an
isotropic centrally symmetric convex body in ${\mathbb R}^n$
then for any centrally symmetric convex body $K$ in ${\mathbb R}^n$ and any $s\gr c_1n$,
$$\int_C\cdots\int_C\int_{\Omega }\Big\|\sum_{j=1}^sg_j(\omega )x_j\Big\|_K\,d\omega \,dx_s\cdots dx_1\gr c_2\sqrt{s}L_C\sqrt{n}M(K).$$
\end{theorem}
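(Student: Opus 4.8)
The plan is to reduce the Gaussian average on the left-hand side to a multi-integral norm of the type handled by Theorem~\ref{th:intro-1}, and then exploit the lower bound \eqref{eq:GM-basic} together with the reverse Santal\'o and slicing-type machinery. First I would fix the Gaussian vector ${\bf g}=(g_1,\ldots ,g_s)$ on $\Omega$ and observe that, conditionally on ${\bf g}$, the inner integral $\int_C\cdots\int_C\|\sum_j g_j(\omega )x_j\|_K\,dx_s\cdots dx_1$ is exactly $\|{\bf g}(\omega )\|_{C^s,K}$. Thus the quantity on the left is $\int_\Omega \|{\bf g}(\omega )\|_{C^s,K}\,d\omega = {\mathbb E}\,\|{\bf g}\|_{C^s,K}$, the Gaussian mean of the norm $\|\cdot\|_{C^s,K}$, which equals (up to absolute constants) $\sqrt{s}$ times the half-sum of the $\ell_2^s$-averages, i.e.\ $\approx \sqrt{s}\cdot w(({\mathcal B}_s)^{\circ})$ — more precisely ${\mathbb E}\,\|{\bf g}\|_{C^s,K}\approx \sqrt{s}\,\int_{S^{s-1}}\|\xi\|_{C^s,K}\,d\sigma(\xi)=\sqrt{s}\,M({\mathcal B}_s)$ since the $\ell_2^s$-norm of a standard Gaussian concentrates near $\sqrt{s}$. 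So the theorem is equivalent to the lower bound $M({\mathcal B}_s)\gr c\,L_C\sqrt{n}M(K)$ announced in the paragraph preceding Theorem~\ref{th:intro-3}.

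Next I would establish that lower bound for $M({\mathcal B}_s)$ when $s\gr c_1n$. By Lemma~\ref{lem:N-0} we have $M({\mathcal B}_s)\gr \frac{1}{{\rm vrad}({\mathcal B}_s)}\approx \frac{1}{\sqrt{s}}\|\sum_{i=1}^se_i\|_{{\mathcal B}_s}$, so it suffices to bound $\|\sum_{i=1}^se_i\|_{{\mathcal B}_s}=\|{\bf 1}\|_{C^s,K}$ from below by $c\,\sqrt{s}\,L_C\sqrt{n}M(K)$. Equivalently, using the basic identity \eqref{eq:basic-identity}/\eqref{eq:isotropic-identity}, $\|{\bf 1}\|_{C^s,K}=L_C\sqrt{s}\,I_1(\mu_{{\bf 1}},K)$ where $\mu_{{\bf 1}}$ is the isotropic log-concave measure attached to the normalized sum $\frac{1}{\sqrt{s}}(X_1+\cdots +X_s)$. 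When $s\gr c_1n$ the measure $\mu_{{\bf 1}}$ is, by the central limit theorem for convex bodies (Klartag's CLT / Anttila--Ball--Perissinaki), close to the standard Gaussian in a strong enough sense; in particular $\mu_{{\bf 1}}$ is a "super-Gaussian" measure on a central Euclidean ball of radius $\approx \sqrt{n}$ in the sense that $\mu_{{\bf 1}}\big(\tfrac12\sqrt{n}\,B_2^n\big)\gr c>0$. From this one gets $I_1(\mu_{{\bf 1}},K)=\int \|x\|_K\,d\mu_{{\bf 1}}(x)\gr c\sqrt{n}\int_{S^{n-1}}\|\xi\|_K\,d\sigma(\xi)=c\sqrt{n}M(K)$, where the passage from the radial average on the shell to the full sphere average uses only the symmetry of $K$ and a Markov-type truncation (the contribution of directions where $\|\xi\|_K$ is large only helps). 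Combining, $\|{\bf 1}\|_{C^s,K}\gr c\,L_C\sqrt{s}\sqrt{n}M(K)$ and hence $M({\mathcal B}_s)\gr c\,L_C\sqrt{n}M(K)$, which rereads as the claimed Gaussian lower bound.

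The main obstacle is the quantitative CLT input: one needs that for $s\gr c_1n$ the normalized average $\frac{1}{\sqrt{s}}\sum X_j$ puts a fixed positive mass on a ball of radius comparable to $\sqrt{n}$, uniformly over isotropic $C$. This is exactly the regime where the Berry--Esseen-type bounds of Klartag (and, for the weaker statement needed here, the thin-shell estimates now available via Klartag--Lehec) apply; alternatively, one can cite the original computation in \cite{BMMP}, which is precisely Theorem~\ref{th:BMMP-2} reformulated, so the work is to verify that the Gaussian-average form is equivalent to the $M({\mathcal B}_s)$ form via the concentration of $\|{\bf g}\|_2$ and the $1$-symmetry of ${\mathcal B}_s$. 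The remaining steps — the conditioning argument producing $\int_\Omega\|{\bf g}(\omega)\|_{C^s,K}d\omega$, the identification with $\sqrt{s}\,M({\mathcal B}_s)$, and the use of Lemma~\ref{lem:N-0} — are routine.
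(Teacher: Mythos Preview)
Your reduction of the left-hand side to $\sqrt{s}\,M({\mathcal B}_s)$ is correct and matches the paper's own observation in the Note following Theorem~\ref{th:BMMP-2}. From that point on, however, your route diverges from the paper's and contains a genuine gap.

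The paper does \emph{not} pass through $\|{\bf 1}\|_{C^s,K}$ or any CLT for the normalized sum. Instead (see the last sentence of the proof of Theorem~\ref{th:N-1}) it conditions on $(x_1,\ldots ,x_s)$ and uses the Adamczak--Litvak--Pajor--Tomczak-Jaegermann empirical-covariance theorem: for $s\gr c_1n$, with overwhelming probability the points satisfy $\sum_{j}\langle x_j,z\rangle^2\gr \tfrac12 sL_C^2\|z\|_2^2$. On this event Slepian's inequality compares the Gaussian process $z\mapsto \langle \sum_j g_jx_j,z\rangle$ with $z\mapsto cL_C\sqrt{s}\sum_j g_j'z_j$, giving
\[
\int_\Omega\Big\|\sum_{j=1}^s g_j(\omega)x_j\Big\|_K\,d\omega\ \gr\ c\,L_C\sqrt{s}\int_\Omega\Big\|\sum_{j=1}^n g_j'(\omega)e_j\Big\|_K\,d\omega\ \approx\ c\,L_C\sqrt{s}\,\sqrt{n}\,M(K),
\]
and integrating over the good set of $(x_1,\ldots,x_s)$ finishes the proof. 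The point is that Slepian controls the full angular behaviour of the Gaussian vector $\sum_j g_jx_j$ through the covariance alone; no distributional CLT is needed.

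Your argument, by contrast, attempts to lower bound $I_1(\mu_{\bf 1},K)=\int\|x\|_K\,d\mu_{\bf 1}(x)$ from the statement ``$\mu_{\bf 1}\big(\tfrac12\sqrt{n}B_2^n\big)\gr c$''. This implication is false: mass on a Euclidean ball (or even on the shell $\{\|x\|_2\approx\sqrt{n}\}$) says nothing about how that mass is distributed \emph{angularly}, and the spherical average $M(K)$ can be dominated by directions that the measure avoids. Concretely, for an isotropic log-concave $\mu$ one can have $I_1(\mu,K)\ll \sqrt{n}M(K)$: take $\mu$ uniform on the isotropic cube and $K=B_\infty^n$, so that $\|x\|_K=\|x\|_\infty$ is bounded $\mu$-a.s.\ while $\sqrt{n}M(B_\infty^n)\approx\sqrt{\log n}$. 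The phrase ``a Markov-type truncation (the contribution of directions where $\|\xi\|_K$ is large only helps)'' is the wrong way around: large values of $\|\xi\|_K$ help an upper bound, not a lower bound, unless you know the measure actually visits those directions. To salvage your route you would need a quantitative statement that $\mu_{\bf 1}$ dominates a rotation-invariant measure on a shell, or a Slepian/Sudakov-type comparison --- which is exactly what the paper supplies by fixing the $x_j$'s and comparing covariances.
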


\begin{note*}We easily check that
\begin{align*}\int_C\cdots\int_C\int_{\Omega }\Big\|\sum_{j=1}^sg_j(\omega )x_j\Big\|_K\,d\omega \,dx_s\cdots dx_1
&=\int_{\Omega }\Big\|\sum_{j=1}^sg_j(\omega )e_j\Big\|_{C^s,K}d\omega \\
&=\int_{{\mathbb R}^s}\|y\|_{C^s,K}d\gamma_s(y)\approx \sqrt{s}M({\mathcal B}_s).
\end{align*}
Therefore, Theorem~\ref{th:BMMP-2} states that if $s\gr c_1n$ then
\begin{equation}\label{eq:BMMP-low}M({\mathcal B}_s)\gr c_2L_C\sqrt{n}M(K)\end{equation}
for some absolute constant $c_2>0$. We will show that (for the same values of $s$) a reverse inequality is also true.
\end{note*}

\begin{theorem}\label{th:N-1}There exist absolute constants $c_i>0$ such that: If $C$ is an isotropic centrally symmetric convex
body in ${\mathbb R}^n$ then for any centrally symmetric convex body $K$ in ${\mathbb R}^n$ and any $s\gr c_1n$,
$$c_2L_C\sqrt{n}M(K)\ls M({\mathcal B}_s)\ls c_3L_C\sqrt{n}M(K),$$
where ${\mathcal B}_s$ is the unit ball of the norm $\|\cdot\|_{C^s,K}$ on ${\mathbb R}^s$.
\end{theorem}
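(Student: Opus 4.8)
The lower bound is exactly the content of Theorem~\ref{th:BMMP-2}, as recorded in \eqref{eq:BMMP-low}, so the task is to prove the upper estimate $M(\mathcal{B}_s)\ls c_3L_C\sqrt{n}M(K)$ for $s\gr c_1n$. The plan is to pass from $M(\mathcal{B}_s)$ to a Gaussian average and then use Theorem~\ref{th:intro-2} together with the basic identity from Section~\ref{sec:upper-bound}. Concretely, by the computation in the Note following Theorem~\ref{th:BMMP-2} we have
\begin{equation*}
\sqrt{s}\,M(\mathcal{B}_s)\approx \int_{\mathbb{R}^s}\|y\|_{C^s,K}\,d\gamma_s(y)=\int_{\Omega}\Big\|\sum_{j=1}^s g_j(\omega)e_j\Big\|_{C^s,K}\,d\omega,
\end{equation*}
where $g_1,\dots,g_s$ are i.i.d.\ standard Gaussians. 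So it suffices to bound this Gaussian average by $c\sqrt{s}\,L_C\sqrt{n}M(K)$.

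The key point is that for a fixed realization ${\bf g}=(g_1,\dots,g_s)$ with ${\bf g}\neq 0$, Lemma~\ref{lem:identity} and the isotropic normalization \eqref{eq:isotropic-identity} give $\|{\bf g}\|_{C^s,K}=\|{\bf g}\|_2\,L_C\,I_1(\mu_{{\bf g}/\|{\bf g}\|_2},K)$, where $\mu_{\bf t}$ is an isotropic log-concave probability measure (the one built in Section~\ref{sec:upper-bound} from $\sum t_jX_j$). Applying Theorem~\ref{th:intro-2} to each such $\mu_{\bf t}$ yields $I_1(\mu_{{\bf g}/\|{\bf g}\|_2},K)\ls c_2\sqrt{n}(\log n)^5 M(K)$, hence
\begin{equation*}
\Big\|\sum_{j=1}^s g_j e_j\Big\|_{C^s,K}\ls c_2 L_C\sqrt{n}(\log n)^5 M(K)\,\|{\bf g}\|_2
\end{equation*}
for every ${\bf g}$. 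Integrating against $\gamma_s$ and using $\int_{\mathbb{R}^s}\|{\bf g}\|_2\,d\gamma_s\approx\sqrt{s}$ gives $\sqrt{s}\,M(\mathcal{B}_s)\ls c\,L_C\sqrt{n}(\log n)^5 M(K)\sqrt{s}$, i.e.\ $M(\mathcal{B}_s)\ls c\,L_C\sqrt{n}(\log n)^5 M(K)$. This is the right order up to the $(\log n)^5$ factor.

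The obstacle — and the reason the theorem as stated has no $\log n$ loss — is that this crude pointwise application of Theorem~\ref{th:intro-2} is wasteful; one must instead exploit the hypothesis $s\gr c_1n$. The cleaner route I would actually pursue is to use Lemma~\ref{lem:N-0}, which gives $M(\mathcal{B}_s)\ls c\sqrt{\log s}/w(\mathcal{B}_s)$ and also $M(\mathcal{B}_s)\approx \frac{1}{\sqrt{s}}\|\sum e_i\|_{\mathcal{B}_s}^{\,}$ up to the $1$-symmetric structure, so the upper bound on $M(\mathcal{B}_s)$ reduces to a \emph{lower} bound on $w(\mathcal{B}_s)$, equivalently an \emph{upper} bound on $\|\sum_{i=1}^s e_i\|_{\mathcal{B}_s}=\|(1,\dots,1)\|_{C^s,K}$. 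By \eqref{eq:isotropic-identity} the latter equals $\sqrt{s}\,L_C\,I_1(\mu_{(1,\dots,1)/\sqrt{s}},K)$, and when $s\gr c_1n$ the measure $\mu_{(1,\dots,1)/\sqrt s}$ — coming from a normalized sum of $s\gtrsim n$ i.i.d.\ copies of the uniform measure on $C$ — is close to Gaussian (a quantitative CLT for log-concave measures, or directly Berry–Esseen-type control of its $I_1$ parameter), so $I_1(\mu_{(1,\dots,1)/\sqrt s},K)\approx I_1(\gamma_n,K)\approx\sqrt{n}M(K)$ with \emph{absolute} constants. Feeding this back through Lemma~\ref{lem:N-0} yields $M(\mathcal{B}_s)\ls c_3 L_C\sqrt{n}M(K)$ with no logarithmic loss, which together with \eqref{eq:BMMP-low} completes the proof. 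The main technical step to verify carefully is precisely this last comparison $I_1(\mu_{(1,\dots,1)/\sqrt s},K)\lesssim\sqrt{n}M(K)$ for $s\gr c_1n$ with absolute constants; everything else is bookkeeping with Lemma~\ref{lem:N-0} and \eqref{eq:isotropic-identity}.
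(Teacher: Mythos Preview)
Your first route via Theorem~\ref{th:intro-2} is correct but, as you note, loses $(\log n)^5$. Your second route, however, has two genuine gaps. First, Lemma~\ref{lem:N-0} does \emph{not} give $M(\mathcal{B}_s)\approx \frac{1}{\sqrt{s}}\bigl\|\sum e_i\bigr\|_{\mathcal{B}_s}$: it only gives the one-sided bound $\frac{1}{\sqrt{s}}\bigl\|\sum e_i\bigr\|_{\mathcal{B}_s}\approx 1/\mathrm{vrad}(\mathcal{B}_s)\ls M(\mathcal{B}_s)$, and for general $1$-symmetric bodies the reverse can fail by $\sqrt{\log s}$ (take $\mathcal{B}_s=B_\infty^s$). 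The only upper bound on $M(\mathcal{B}_s)$ that Lemma~\ref{lem:N-0} provides is through the $MM^\ast$ estimate \eqref{eq:geometry-1}, which carries an intrinsic $\sqrt{\log s}$ factor; so even a perfect CLT step would leave you with $M(\mathcal{B}_s)\lesssim \sqrt{\log s}\,L_C\sqrt{n}M(K)$. Second, the CLT step itself---that $I_1(\mu_{(1,\dots,1)/\sqrt{s}},K)\lesssim \sqrt{n}M(K)$ with an absolute constant once $s\gtrsim n$---is not a standard fact: Berry--Esseen-type results control marginals or weak convergence, not $\int\|x\|_K\,d\mu$ uniformly over all symmetric $K$, and you give no indication of how to establish this.

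The paper avoids both problems by a direct Gaussian-comparison argument that does not pass through Lemma~\ref{lem:N-0} or any CLT. Starting from $\sqrt{s}\,M(\mathcal{B}_s)\approx \int_{C^s}\int_\Omega\bigl\|\sum_j g_j(\omega)x_j\bigr\|_K\,d\omega\,dx$, one bounds the inner Gaussian integral for \emph{fixed} $(x_1,\dots,x_s)$. The Adamczak--Litvak--Pajor--Tomczak-Jaegermann covariance concentration theorem guarantees that when $s\ge c_1n$, outside a set of measure $\le e^{-c\sqrt{n}}$ one has $\sum_j\langle x_j,z\rangle^2\approx sL_C^2\|z\|_2^2$ for all $z$; on this good set Slepian's inequality compares $\sum_j g_j x_j$ with $\sqrt{s}L_C\sum_j g_j' e_j$ and yields $\int_\Omega\bigl\|\sum_j g_j x_j\bigr\|_K\,d\omega\ls c\sqrt{s}L_C\sqrt{n}M(K)$ with no logarithmic loss. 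The exceptional set is handled crudely via $R(C)\ls cnL_C$. The missing idea in your proposal is precisely this: rather than fixing ${\bf t}=(1,\dots,1)$ and invoking a CLT in the $x$-variables, one keeps the Gaussian average in ${\bf t}$ and uses covariance concentration in the $x$-variables to enable Slepian.
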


\begin{proof}We shall use a well-known result of Adamczak, Litvak, Pajor and Tomczak-Jaegermann
from \cite{Adamczak-Litvak-Pajor-Tomczak-2010}: For every $\epsilon>0$ and $t\gr 1$ there exists
a constant $C=C(t,\epsilon)>0$ with the following property: if $s\gr C(t,\epsilon)n$ and $X_1,\ldots ,X_s$ are independent
random vectors in ${\mathbb R}^n$ with the same isotropic log-concave distribution, then we have
$$\Big\|\frac{1}{s}\sum_{j=1}^sX_j\otimes X_j-I_n\Big\|\ls\epsilon $$
with probability greater than $1-\exp(-ct\sqrt{n})$. Choosing $\epsilon=\frac{1}{2}$, $t=1$ and applying this
result we get that there exists an absolute constant $c_1>0$ such that if $s\gr c_1n$ then there exists ${\cal A}\subset C^s$ of measure
$|{\cal A}|>1-e^{-c_2\sqrt{n}}$ such that for any $s$-tuple $(x_1,\ldots ,x_s)\in {\cal A}$ we have that
\begin{equation}\label{eq:slepian-double}\frac{1}{2}sL_C^2\|z\|_2^2\ls \sum_{j=1}^s\langle x_j,z\rangle^2\ls\frac{3}{2}sL_C^2\|z\|_2^2 \end{equation}
for every $z\in {\mathbb R}^n$. Fix $(x_1,\ldots ,x_s)\in {\cal A}$. We have
$$\frac{1}{2}sL_C^2\sum_{j=1}^n\langle z,e_j\rangle^2\ls \sum_{j=1}^s\langle z,x_j\rangle^2\ls\frac{3}{2}sL_C^2\sum_{j=1}^n\langle z,e_j\rangle^2$$
for all $z\in {\mathbb R}^n$. Applying Slepian's comparison principle (see \cite{Vershynin-book} or \cite[Chapter~9]{AGA-book}) for the Gaussian processes
$$X_z:=\Big\langle\sum_{j=1}^sg_jx_j,z\Big\rangle\quad\hbox{and}\quad Y_z:=\sum_{j=1}^sz_jg_j^{\prime}\qquad (z\in K^{\circ})$$
where $g_j,g_j^{\prime}$ are independent standard Gaussian random variables, we see that
\begin{equation}\label{eq:slepian-1}\int_{\Omega }\Big\|\sum_{j=1}^sg_j(\omega )x_j\Big\|_K\,d\omega \ls c_3\sqrt{s}L_C\int_{\Omega }\Big\|\sum_{j=1}^ng_j^{\prime }(\omega )e_j\Big\|_K\,d\omega \ls c_4\sqrt{s}L_C\sqrt{n}M(K)\end{equation}
for every $(x_1,\ldots ,x_s)\in {\cal A}$. On the other hand, if $(x_1,\ldots ,x_s)\notin {\cal A}$ then
we may write
\begin{equation*}\sum_{j=1}^s\langle x_j,z\rangle^2\ls \sum_{j=1}^s\|x_j\|_2^2\|z\|_2^2\ls
sR(C)^2\|z\|_2^2\ls c_5sn^2L_C^2\|z\|_2^2,\end{equation*}
recalling that $R(C)\ls c_6nL_C$ because $C$ is isotropic. Working as before, we obtain the bound
\begin{equation}\label{eq:slepian-2}\int_{\Omega }\Big\|\sum_{j=1}^sg_j(\omega )x_j\Big\|_K\,d\omega
\ls c_7\sqrt{s}nL_C\sqrt{n}M(K)\end{equation}
for every $(x_1,\ldots ,x_s)\notin {\cal A}$. Integrating on ${\cal A}$ and using \eqref{eq:slepian-1} and \eqref{eq:slepian-2} we finally get
\begin{align*}\sqrt{s}M({\mathcal B}_s) &\approx \int_C\cdots\int_C\int_{\Omega }\Big\|\sum_{j=1}^sg_j(\omega )x_j\Big\|_K\,d\omega \,dx_s\cdots dx_1\\
&=\int_{{\cal A}}\int_{\Omega }\Big\|\sum_{j=1}^sg_j(\omega )x_j\Big\|_K\,d\omega
+\int_{C^s\setminus {\cal A}}\int_{\Omega }\Big\|\sum_{j=1}^sg_j(\omega )x_j\Big\|_K\,d\omega \\
&\ls c_8\sqrt{s}\big (1+n\cdot\exp(-c\sqrt{n})\big )L_C\sqrt{n}M(K)\ls c_9 \sqrt{s}L_C\sqrt{n}M(K),
\end{align*}
which shows that
$$M({\mathcal B}_s)\ls c_{10}L_C\sqrt{n}M(K).$$
The proof of \eqref{eq:BMMP-low}, which was already obtained in \cite{BMMP}, follows by a similar (and simpler) argument
if we start from the left-hand side inequality of \eqref{eq:slepian-double} .\end{proof}

\begin{remark}\label{rem:local}\rm Combining Theorem~\ref{th:N-1} with Theorem~\ref{th:intro-1}
we obtain further information in the case $s\gr c_1n$. Note that the critical dimension
of ${\mathcal B}_s$ (see \cite[Section~5.3]{AGA-book}) satisfies $k({\mathcal B}_s)\approx s\big(M/b\big)^2$
where $M=M({\mathcal B}_s)\gtrsim L_C\sqrt{n}M(K)$ by \eqref{eq:BMMP-low} and $b$ is the best constant for which
$\|{\bf t}\|_{C^s,K}\ls b\|{\bf t}\|_2$ for all ${\bf t}\in {\mathbb R}^s$. From Theorem~\ref{th:intro-1} we know
that $b\lesssim c_1L_C(\log n)^5\,\sqrt{n}M(K)$, and hence
$$k({\mathcal B}_s)\gr cs/(\log n)^{10}.$$
A standard application of the deviation inequality for Lipschitz functions on the Euclidean sphere of ${\mathbb R}^s$
(see \cite[Proposition~5.2.4]{AGA-book}) shows that an $s$-tuple ${\bf t}=(t_1,\ldots ,t_s)$ with $\|{\bf t}\|_2=1$
satisfies
$$\frac{1}{2}M({\cal B}_s)\|{\bf t}\|_2\ls \|{\bf t}\|_{{\cal B}_s}\ls \frac{3}{2}M({\cal B}_s)\|{\bf t}\|_2$$
with probability greater than $1-\exp (-c_3s/(\log n)^{10})$ and hence, by Theorem~\ref{th:N-1},
$$\|{\bf t}\|_{{\cal B}_s}\approx L_C\sqrt{n}M(K)\|{\bf t}\|_2$$
with probability greater than $1-\exp (-c_3s/(\log n)^{10})$ on the sphere of ${\mathbb R}^s$.
\end{remark}

\section{Further remarks and applications}\label{sec:reduction}

We believe that Theorem~\ref{th:intro-2} is a useful result that should find several applications. As a first example, we
prove Theorem~\ref{th:intro-5}: If $C$ and $K$ are two centrally symmetric convex bodies in ${\mathbb R}^n$ then there exists $T\in SL(n)$ such that
\begin{equation*}\frac{1}{\vol_n(C)}\int_C\|x\|_{TK}dx\ls c(\log n)^9\left(\frac{\vol_n(C)}{\vol_n(K)}\right)^{1/n}\end{equation*}
where $c>0$ is an absolute constant.

\begin{proof}[Proof of Theorem~$\ref{th:intro-5}$]By homogeneity we may assume that $\vol_n(C)=1$. We may find $T_1\in SL(n)$ such that
$C_1:=T_1(C)$ is isotropic. Consider the isotropic log-concave probability measure $\nu_C$ with density
$L_C^n{\mathbf 1}_{K/L_C}$. Direct computation shows that
$$\int_{C_1}\|x\|_{T_1T(K)}dx=I_1(\nu_{C_1},T_1T(K))L_C$$
for every $T\in SL(n)$. Using Theorem~\ref{th:intro-2} we get
$$\int_C\|x\|_{TK}dx=\int_{C_1}\|x\|_{T_1T(K)}dx\ls c_1\sqrt{n}(\log n)^5L_C\,M(T_1T(K))$$
for every $T\in SL(n)$, where $c_1>0$ is an absolute constant. From Pisier's inequality (see \cite[Corollary~6.5.3]{AGA-book}) and the
Blaschke-Santal\'{o} inequality we may choose $T$ so that
\begin{align*}M(T_1T(K))&=w((T_1T(K))^{\circ})=w((T_1T)^{-\ast}(K^{\circ}))\ls c_2\sqrt{n}(\log n)\vol_n(K^{\circ})^{1/n}\\
&\ls c_3\sqrt{n}(\log n)\frac{\vol_n(B_2^n)^{2/n}}{\vol_n(K)^{1/n}}\approx \frac{\log n}{\sqrt{n}}\vol_n(K)^{-1/n},
\end{align*}
since $\vol_n(B_2^n)^{2/n}\approx 1/n$. Combining the above we see that there exists $T\in SL(n)$ such that
$$\int_C\|x\|_{TK}dx\ls c(\log n)^6L_C\,\vol_n(K)^{-1/n}$$
and the result follows from the bound $L_C\ls L_n=O((\log n)^4)$.\end{proof}

\smallskip

As a second application we prove Theorem~\ref{th:intro-6} which gives an answer to a question from \cite{Giannopoulos-Paouris-Vritsiou-2012}
regarding the parameter
$$I_1(K,Z_q^{\circ }(K)):=\int_K\|\langle \cdot ,x\rangle \|_{L_q(K)}dx.$$

\begin{proof}[Proof of Theorem~$\ref{th:intro-6}$]Note that $I_1(K,Z_q^{\circ }(K))$  is invariant under invertible linear
transformations of $K$ and hence we may assume that $K$ is isotropic.
As in the proof of Theorem~\ref{th:intro-5}, consider the isotropic log-concave probability measure $\nu_K$ with
density $L_K^n{\mathbf 1}_{K/L_K}$. Direct computation shows that
$$I_1(K,Z_q^{\circ }(K))=I_1(\nu_K,Z_q^{\circ }(K))L_K.$$
Using Theorem~\ref{th:intro-2} we immediately see that
$$I_1(\nu_K,Z_q^{\circ}(K))\ls c_1(\log n)^5\,\int_{{\mathbb R}^n}\|x\|_{Z_q(K)^{\circ}}\,d\gamma_n(x)
\ls c_2\sqrt{n}(\log n)^5w(Z_q(K)).$$
We use E.~Milman's estimates \cite{EMilman-2014} on the mean width $w(Z_q(K))$ of
the $L_q$-centroid bodies $Z_q(K)$ of an isotropic convex body $K$ in ${\mathbb R}^n$.

\begin{theorem}[E.~Milman]\label{th:Emanuel2}Let $K$ be an isotropic convex body in ${\mathbb R}^n$. Then, for all $1\ls q\ls n$ one has
\begin{equation}w(Z_q(K)) \ls c_1\sqrt{q}\log (1+q)\max\left\{\frac{\sqrt{q}\log (1+q)}{\sqrt{n}},1\right\}L_K\end{equation}
where $c_1>0$ is an absolute constant.
\end{theorem}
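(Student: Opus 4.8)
This inequality is E.~Milman's \cite{EMilman-2014}; the plan below outlines how its proof runs. The first step is to pass to the Gaussian mean width $w_G(A)=\mathbb{E}_g\sup_{x\in A}\langle g,x\rangle$ of $A=Z_q(K)$, where $g$ is a standard Gaussian vector in ${\mathbb R}^n$: since $w_G(A)=\mathbb{E}\|g\|_2\cdot w(A)\approx\sqrt{n}\,w(A)$, it suffices to show $w_G(Z_q(K))\ls c\sqrt{qn}\,\log(1+q)\max\{\sqrt{q}\log(1+q)/\sqrt n,\,1\}L_K$, and then divide by $\sqrt n$. The plan is to estimate $w_G(Z_q(K))$ by a chaining (Dudley) bound $w_G(Z_q(K))\ls c\int_0^\infty\sqrt{\log N(Z_q(K),tB_2^n)}\,dt$, so the whole game becomes controlling the entropy numbers $N(Z_q(K),tB_2^n)$. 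When $K$ is isotropic there are three scales: the inner scale $L_K$ (one has $Z_q(K)\supseteq Z_2(K)=L_KB_2^n$, so $w_G(Z_q(K))\gr w_G(Z_2(K))\approx\sqrt n\,L_K$), the volumetric scale $\vrad(Z_q(K))\ls c\sqrt{q}\,L_K$ (Paouris), and the outer scale $R(Z_q(K))\ls cqL_K$ (from Borell's inclusion $Z_q(K)\subseteq c_1\tfrac q2 Z_2(K)$, i.e.\ \eqref{eq:Zq-inclusions} with $p=2$). Two further structural facts will be used to organise the bookkeeping: the exact identity $w(Z_2(K))=L_K$ and the log-convexity of $1/q\mapsto w(Z_q(K))$, which follows from Lyapunov's inequality applied to $q\mapsto h_{Z_q(K)}(\theta)=\|\langle\cdot,\theta\rangle\|_{L_q(K)}$ and to integration over $S^{n-1}$; these let one interpolate estimates between well-chosen values of $q$ and handle the endpoint $q=n$, where $Z_n(K)\approx\mathrm{conv}\{K,-K\}$, separately.

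The heart of the argument is a sufficiently sharp estimate for $N(Z_q(K),tB_2^n)$ in the intermediate range $L_K\ls t\ls qL_K$. The key geometric point is that, although $Z_q(K)$ may reach the radius $qL_K$, it does so only along a ``sparse'' family of directions: at scales above $\sqrt q\,L_K$ the body is, in a quantitative sense, close to being $q$-dimensional and of volumetric size $\sqrt q\,L_K$, with the long ``tentacles'' contributing little to the metric entropy. Making this precise is exactly where one invokes the regularity theory of the family $\{Z_q(K)\}_q$ due to Paouris and refined by Giannopoulos, Paouris and others — the large-deviation and small-ball estimates for isotropic log-concave measures, the sharp bounds for the volume and for the mean width of sections and projections of $Z_q(K)$, and the near-fixed-point property $Z_q(Z_q(K))\approx Z_q(K)$. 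Once the covering numbers are pinned down with the correct dependence on $q$ and $n$, one splits the Dudley integral over the three ranges of $t$ and performs elementary integrations; after dividing by $\sqrt n$ two competing contributions remain — a term of order $\sqrt q\,\log(1+q)L_K$, which governs the regime $q\ls n/\log^2(1+q)$, and a term of order $\tfrac{q\log^2(1+q)}{\sqrt n}L_K$, which governs $q\gr n/\log^2(1+q)$ — and the two expressions coincide at the transition $\sqrt q\log(1+q)\approx\sqrt n$, so their maximum is precisely the asserted bound.

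I expect the main obstacle to be exactly this covering-number (equivalently, metric-entropy) estimate for $Z_q(K)$ with the sharp dependence. The naive volumetric bound $\log N(Z_q(K),tB_2^n)\ls n\log(e+qL_K/t)$ and the naive ``$q$-dimensional'' bound $\log N(Z_q(K),tB_2^n)\ls q\log(e+qL_K/t)$ each only yield the far weaker estimate $w(Z_q(K))\ls cqL_K$, because they overcount the long tentacles; separating the genuinely $\sqrt q L_K$-round bulk of $Z_q(K)$ from its sparse extremal directions — and doing so uniformly over the isotropic position of $K$ — is where Paouris' deep estimates are indispensable. Two minor points will also need care: the transition regimes $q\approx\sqrt n$ and $q\approx n/\log^2(1+q)$, which is where the $\max$ in the statement comes from, and the degenerate range $q=O(1)$, where the bound simply reduces to $w(Z_q(K))\approx L_K$.
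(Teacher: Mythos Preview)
The paper does not prove Theorem~\ref{th:Emanuel2}; it is quoted verbatim from E.~Milman's work \cite{EMilman-2014} and used as a black box inside the proof of Theorem~\ref{th:intro-6}. There is therefore no ``paper's own proof'' to compare your sketch against.

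For what it is worth, your outline is a reasonable high-level summary of the strategy in \cite{EMilman-2014}: pass to the Gaussian mean width, bound it by a Dudley entropy integral, and reduce everything to sharp covering-number estimates $N(Z_q(K),tB_2^n)$ in the intermediate range of scales, using the regularity of the family $\{Z_q(K)\}$ coming from Paouris' work. You are also right that the substance lies entirely in the covering-number step --- the naive volumetric and ``$q$-dimensional'' bounds only recover $w(Z_q(K))\lesssim qL_K$, and isolating the $\sqrt{q}L_K$-scale bulk from the sparse long directions with the correct $q$-dependence is exactly the content of E.~Milman's argument. Since you explicitly flag that step as the expected obstacle rather than claiming to have it, your proposal is best read as an accurate road map of \cite{EMilman-2014} rather than an independent proof; filling in the missing estimate would essentially mean reproducing that paper.
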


Combining the above we get
$$I_1(K,Z_q^{\circ }(K))=I_1(\nu_K,Z_q^{\circ }(K))L_K\ls c_1\sqrt{qn}(\log n)^5\log (1+q)\max\left\{\frac{\sqrt{q}\log (1+q)}{\sqrt{n}},1\right\}L_K^2$$
for all $1\ls q\ls n$ and the theorem follows.\end{proof}

\bigskip

\noindent {\bf Acknowledgement.} The author acknowledges support by the Hellenic Foundation for
Research and Innovation (H.F.R.I.) under the ``First Call for H.F.R.I.
Research Projects to support Faculty members and Researchers and
the procurement of high-cost research equipment grant" (Project
Number: 1849).

\bigskip

\small


\bibliographystyle{amsplain}

\normalsize


\bigskip

\noindent{\bf Keywords:} convex bodies, log-concave probability measures, weighted sums of random vectors, isotropic position.
\\
\thanks{\noindent {\bf 2010 MSC:} Primary 46B06; Secondary 52A23, 52A40, 60D05.}

\bigskip

\bigskip

\noindent \textsc{Nikos \ Skarmogiannis}: Department of
Mathematics, National and Kapodistrian University of Athens, Panepistimioupolis 157-84,
Athens, Greece.

\smallskip

\noindent \textit{E-mail:} \texttt{nikskar@math.uoa.gr}

\end{document}